\documentclass[10pt]{article}
\usepackage{amssymb,amsthm,amsmath}
\usepackage{color,graphicx}
\usepackage{authblk}
\usepackage[dvipsnames]{xcolor}
\usepackage{tikz,cancel}
\usepackage{mathrsfs}
\usetikzlibrary{patterns}
\usepackage{mathtools}
\usepackage{caption}
\usepackage{lineno}
\usepackage{hyperref}
 \hypersetup{linkcolor=blue, colorlinks=true,citecolor = red}
\usepackage[capitalise]{cleveref}
\usepackage{upref}
\usepackage{changes}
\usepackage{mathrsfs}

\setlength{\textwidth}{150truemm}
\setlength{\textheight}{230truemm}
\setlength{\topmargin}{10truemm} \hoffset -10truemm \voffset
-21truemm
\parindent 4mm
\parskip 1.2ex plus 0.5ex minus 0.5ex
\newtheorem{corollary}{Corollary}[section]
\newtheorem{definition}[corollary]{Definition}

\newtheorem{proposition}[corollary]{Proposition}
\newtheorem{remark}[corollary]{Remark}
\newtheorem{theorem}[corollary]{Theorem}
\numberwithin{equation}{section}

\renewcommand{\leq}{\leqslant}

\renewcommand{\geq}{\geqslant}

%
\makeatletter
\def\section{\@startsection {section}{1}{\z@}{-3.5ex plus -1ex minus
    -.2ex}{2.3ex plus .2ex}{\large\bf}}


\makeatother

\begin{document}

\title{\bf Global Exponential Stabilization for a Simplified Fluid-Particle Interaction System }
\author[1]{Marius Tucsnak}
\affil[1]{Institut de Math\'{e}matiques de Bordeaux UMR 5251, Universit\'{e} de Bordeaux/Bordeaux INP/CNRS, 351 Cours de la Lib\'{e}ration, 33 405 TALENCE, France} 
\author[2]{Zhuo Xu}
\affil[2]{Institut de Math\'{e}matiques de Bordeaux UMR 5251, Universit\'{e} de Bordeaux/Bordeaux INP/CNRS, 351 Cours de la Lib\'{e}ration, 33 405 TALENCE, France}
\date{}

\maketitle

\begin{abstract}
\vskip 0.2in
\par
This work considers a system coupling a viscous Burgers equation (aimed to describe a simplified model of $1D$ fluid flow) with
the ODE describing the motion of a point mass moving inside the fluid. The point mass is possibly under the action of a feedback control. Our main contributions are that we prove two global exponential stability results. More precisely, we first show that the velocity field corresponding to the free dynamics case is globally exponentially stable. We next show that, in the presence of the feedback control both the velocity field and the distance from the mass point to a prescribed target position decay exponentially. The proofs of these results heavily rely on the use of a special test function allowing both to prove that the mass point stays away from the boundary and to construct a perturbed Lyapunov function.

\textbf{Key words:}
Viscous Burgers equation; fluid-solid interaction system; finite energy solution; well-posedness; global exponential  stability.

\textbf{AMS subject classification:} 35R35, 35Q35, 74F10,
93B52, 93D15

\end{abstract}

\tableofcontents
\section{Introduction}\label{sec_very_first}
\pagenumbering{arabic}\setcounter{page}{1}
\vspace{0.5cm}
In recent years, the equations that describe fluid-structure interactions  have attracted significant attention in mathematics and physics. Early references such as Conca, San Mart{\'i}n and Tucsnak  \cite{Tucsnak20001019}, Desjardins and Esteban \cite{Desjardins199959}, Tucsnak \cite{tucsnak2002global}, Takahashi \cite{Takahashi2003analysis}, Vázquez and Zuazua \cite{vazquez2003large}, \cite{Vzquez2006LACKOC} provided  well-posedness results for such interaction systems.

There  exist also results on the stabilization and the controllability
of such type of systems. These results typically require smooth and close to equilibrium initial data. One persistent difficulty in obtaining global results is to ensure the fact that there are no
collisions of the solid with the boundary. This can be granted  by the choice of the data of the problem (see, for instance, Hesla \cite{Helsa2004PhD75H}, Hillairet \cite{hillairet2007lack}, Hillairet and G\'erard Varet \cite{gerard2010regularity} or Hillairet and Takahashi
\cite{hillairet2010blow}) and/or by the design of the control mechanism,  Takahashi, Tucsnak, and Weiss \cite{Takahashi2015stabilization}.

In this paper, we consider a one-dimensional model first introduced by \cite{vazquez2003large}, \cite{Vzquez2006LACKOC}, where  the existence, uniqueness and large-time behavior of the solutions have been first investigated, in the case of a fluid filling the entire space. Subsequently, Doubova and Fernandez-Cara \cite{doubova2005some} addressed the boundary controllability problem for this system, proving local null controllability for the nonlinear system when control acts on both boundaries. Then, specifically regarding the establishment of null controllability when control is applied at only one end of the domain, Liu, Takahashi, and Tucsnak \cite{liu2013single} solved this problem by employing spectral methods and introducing a the so called {\em source point method}.

In 2015, Cîndea, Micu, Roventa, and Tucsnak \cite{tucsnak2015particle} considered a model in which control is applied directly to the particle. Their analysis focused on a closed-loop system with  initial data close to equilibrium, and they examined the  long-time behavior of the solutions. Using a combination of spectral and fixed-point techniques, they established local null controllability for the nonlinear interaction system.

There are two questions left open in  \cite{tucsnak2015particle}. The first one is the global well-posedness for the closed loop interaction system. The second consists in obtaining decay rates in the presence of the considered feedback control.

In this paper  we consider the  system:
\begin{align}
	\label{conversition_of_mom_of_solid_without_control}	&\dot{v}(t,y) + v(t,y) v_y(t,y)- v_{yy}(t,y)  = 0 & t &\in (0,T), \quad y \in (-1,1), \quad y \neq h(t), \\
	\label{boundary_condition_of_fluid_without_control}	&v(t,-1) = v(t,1) = 0 & t &\in (0,T), \\
	\label{campability_condition_without_control}&\dot{h}(t) = v(t,h(t)) & t &\in (0,T), \\
	\label{newton_second_law_without_control}&\ddot{h}(t) = [v_y](t,h(t))
	+u(t)  & t &\in (0,T), \\
	\label{inlitial_date_of_veloctiy_without_control}&v(0,y) = v_0(y) & y &\in (-1,1),\\
	\label{inlitial_date_of_solid_without_control}&h(0)=h_0,\qquad \qquad \dot{h}(0)=g_0.
\end{align}
where
\begin{equation}\label{definition_of_jump}
\left[v_y\right](t,h(t))=v_y\left(t,h(t)^+\right)-v_y\left(t,h(t)^-\right)\qquad \qquad (t\geq 0).
\end{equation}
Here, we consider a fluid velocity field denoted by $v = v(t, y)$, which is governed by the viscous Burgers equation. Additionally, the motion of a particle within this field obeys Newton's second law (as referenced in \eqref{newton_second_law_without_control}). The position of the particle at any given time is denoted by $h(t)$, and its velocity is represented by $\dot{h}(t)$, both of which are functions of time only. The input force acting on the particle, denoted by $u$, also depends solely on time $t$.

We observe that the system defined by  \eqref{conversition_of_mom_of_solid_without_control}- \eqref{inlitial_date_of_solid_without_control} constitutes a free boundary value problem since the position $h(t)$ of the particle, thus the domain filled by the fluid, is one of the unknowns of the problem. This is a property  which \eqref{conversition_of_mom_of_solid_without_control}- \eqref{inlitial_date_of_solid_without_control} shares with Stefan problems, see Koga and Krstic \cite{koga2022control} for the control theoretic properties of the corresponding systems. However, in our case, the presence of a second derivative of $h$ in \eqref{newton_second_law_without_control}, which is the main difference with respect to Stefan type systems (where a single time derivative appears) yields more regularity of the state trajectories.
The presence of a free boundary necessitates a precise definition of the concept of a finite energy solution. Such a definition has been meticulously formulated in \cite[Definition 1.1]{tucsnak2015particle}.
\begin{definition}\label{definition_of_finite_energy}
	Given $T>0$, $v_0\in L^2(-1,1)$, $g_0\in \mathbb{R}$, $h_0\in (-1,1)$ and $u\in L^2(0,T)$, we say that
	\begin{equation}
		\begin{bmatrix}
			v\\
			g\\
			h
		\end{bmatrix}\in \left[ C\left([0,T];\;L^2(-1,1)\right)\cap  L^2\left((0,T);\;H^1_0(-1,1)\right)\right]\times L^2(0,T)\times H^1(0,T),
	\end{equation}
	is a finite energy solution of system \eqref{conversition_of_mom_of_solid_without_control}-\eqref{inlitial_date_of_solid_without_control} on $[0,T]$ if $h(0)=h_0$, $\dot{h}(t)=g(t)=v\left(t,h(t)\right)$, $h(t)\in (-1,1)$ for almost every $t\in [0,T]$ and
	\begin{align}
		&\int_{-1}^{1}v(t,y)\psi(t,y)\,{\rm d}y-\int_{-1}^{1}v_0(y)\psi(0,y)\,{\rm d}y+g(t)l(t)-g_0l(0)-\int_{0}^{t}g(\sigma)\dot{l}(\sigma)\,{\rm d}\sigma\notag\\
		&-\int_{0}^{t}\int_{-1}^{1}v(\sigma,y)\dot{\psi} (\sigma, y)\,{\rm d}y\,{\rm d}\sigma
		+\int_{0}^{t}\int_{-1}^{1} v_y(\sigma, y)\psi_y(\sigma,y)\,{\rm d}y\,{\rm d}\sigma\notag\\
		&-\frac12\int_{0}^{t}\int_{-1}^{1}v^2(\sigma ,y)\psi_y(\sigma,y)\,{\rm d}y\,{\rm d}\sigma
		=\int_{0}^{t}u(\sigma)l(\sigma)\,{\rm d}\sigma,
	\end{align}
	for every $t\in [0,T]$ and for every
	\begin{align}
		&\left[\begin{gathered}
			\psi\\
			l
		\end{gathered}\right]\in \left[ H^1\left((0,T);\;L^2(-1,1)\right)\cap  L^2\left((0,T);\;H^1_0(-1,1)\right)\right]\times H^1(0,T),\\
		&l(t)=\psi(t,h(t))\qquad\qquad\qquad \left(t\in[0,T]\right).
	\end{align}
A triple $\begin{bmatrix}
			v\\
			h\\
			g
		\end{bmatrix}$ which is a finite energy solution on $[0,T]$ for every $T>0$ is called a global finite energy solution.
\end{definition}
Note that the test functions used above depend on the solution and more precisely on its component $h$. Here and in the rest of this paper, for each $m\in \mathbb{N}$, $H^m$ and $H^m_0$ denote the classical Sobolev spaces and $H^{-m}$ stands for the dual of $H^m_0$ with respect to the pivot space $L^2$.

One of the contributions of our work is that we prove the existence and uniqueness of global
finite energy solution of \eqref{conversition_of_mom_of_solid_without_control}-\eqref{inlitial_date_of_solid_without_control} when $u$ is given by a feedback law of the form
\begin{equation}\label{control_design}
	u(t)=\mathcal{K}(h_1-h(t)) \qquad \qquad (\mathcal{K}\geq 0,\qquad t>0),
\end{equation}
with a given $h_1\in (-1,1)$ and $\mathcal{K}\geqslant 0$. This feedback may be regarded as a proportional
derivative (PD) controller, as is often used in control engineering. Another interpretation is that
the particle is actuated by a force which is generated by a spring (with elastic constant $\mathcal{K}$ and fixed at $h_1$). Unlike in \cite{tucsnak2015particle}, in Theorem \ref{global_exists}
below we do not assume that the initial velocity field $v_0$ and the initial data $g_0$ and $h_1-h_0$ are
small, in a suitable sense.

The first main result of this paper is:

\begin{theorem}\label{global_exists}
Let $v_0\in L^2(-1,1)$, $g_0\in\mathbb{R}$, $h_0,\ h_1\in(-1,1)$ and $\mathcal{K}\geqslant 0$. Then system \eqref{conversition_of_mom_of_solid_without_control}-\eqref{inlitial_date_of_solid_without_control}, with $u$ given by the feedback law  \eqref{control_design}, admits a unique global finite energy solution. Moreover, there exists $\alpha>0$ depending on $\Vert v_0\Vert_{L^2(-1,1)}$, $g_0$ $h_0$, $h_1$ and $\mathcal{K}$, such that the trajectory $h$ of the particle satisfies
			\begin{align}\label{estimateh}
			-1+\alpha
			\leq h(t)
			\leq 1-\alpha \qquad\qquad\qquad(t\geqslant 0).
		\end{align}
	Finally, the map defined in 
	\begin{equation}
		\begin{bmatrix} v_0\\ g_0\\ h_0\end{bmatrix}\mapsto
		\begin{bmatrix}  v\\  g\\  h\end{bmatrix},
	\end{equation} 
is continuous from
	$L^2(-1,1)\times\mathbb{R}\times (-1,1)$ to
	$\left[{C}([0,T];L^2(-1,1))\cap
	L^2((0,T);{H}_0^1(-1,1)) \right]\times L^2(0,T)\times
	{H}^1(0,T)$ for every $T>0$.
\end{theorem}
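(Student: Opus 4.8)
The plan is to combine a local well-posedness result with two global-in-time a priori estimates—an energy (Lyapunov) bound and a no-collision bound—and then to bootstrap these into global existence, uniqueness, and continuous dependence.

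First I would construct a local finite energy solution on a maximal interval $[0,T^\ast)$. Since the feedback $u(t)=\mathcal{K}(h_1-h(t))$ from \eqref{control_design} is a bounded, Lipschitz function of $h\in(-1,1)$, I would set up a fixed-point scheme: to a candidate trajectory $\tilde h$ associate the input $u=\mathcal{K}(h_1-\tilde h)\in L^2$, solve \eqref{conversition_of_mom_of_solid_without_control}--\eqref{inlitial_date_of_solid_without_control} with this $u$ using the well-posedness machinery of \cite{tucsnak2015particle} and \cite{liu2013single}, and show the resulting map is a contraction on a short interval. In parallel I would record the basic energy balance obtained by formally testing the weak formulation of \cref{definition_of_finite_energy} against $\psi=v$, $l=g$. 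Setting $\mathcal{E}(t)=\tfrac12\|v(t)\|_{L^2(-1,1)}^2+\tfrac12 g(t)^2+\tfrac{\mathcal{K}}{2}(h_1-h(t))^2$, the control contribution is $u g=\mathcal{K}(h_1-h)\dot h=-\tfrac{d}{dt}\tfrac{\mathcal{K}}{2}(h_1-h)^2$, so the elastic term is absorbed into $\mathcal{E}$ and one is left with
\[
\frac{d}{dt}\mathcal{E}(t)=-\|v_y(t)\|_{L^2(-1,1)}^2\le 0 .
\]
This gives uniform bounds on $\|v\|_{L^2}$ and $|g|$, together with the crucial global dissipation bound $\int_0^\infty\|v_y\|_{L^2}^2\,{\rm d}t\le\mathcal{E}(0)$. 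A technical point I would be careful about here is that $\psi=v$ is not directly admissible at finite energy regularity; this requires a regularization/density argument, the main structural fact being that the Burgers term $\tfrac12\int v^2v_y$ integrates to boundary contributions that vanish against the jump at $y=h$.

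The decisive step—and the one I expect to be the main obstacle—is the no-collision estimate \eqref{estimateh}. The dissipation bound alone is \emph{not} enough: writing $\delta(t)=1-h(t)$ and $g=-\dot\delta$, it is only consistent with $\delta(t)\to 0$ slowly as $t\to\infty$, so a uniform $\alpha$ must come from extra structure (note that the statement allows $\mathcal{K}=0$, where the elastic restoring force is absent and separation must be produced by viscosity alone). I therefore would introduce a special test function $\psi^\ast$ peaked at the particle—a tent-type profile in $H^1_0(-1,1)$ with $\psi^\ast(t,h(t))$ normalized—and use it to build a perturbation functional $Q(t)$, morally of the form $\int_{-1}^1 v\,\phi\,{\rm d}y+g\,\phi(h)$ against a profile $\phi$ chosen so that $Q$ is \emph{coercive at the walls}, i.e. $Q(t)\to+\infty$ as $h(t)\to\pm1$. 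Testing the weak formulation against $\psi^\ast$ produces a differential inequality for the perturbed Lyapunov functional $V(t)=\mathcal{E}(t)+\beta Q(t)$ in which the thin-gap trace inequality $g^2=v(t,h(t))^2\le(1-h(t))\int_{h(t)}^1 v_y^2\,{\rm d}y$ (and its left analogue) lets the dissipation absorb the dangerous terms. I would aim to close this as $\dot V\le C V$, or simply $V(t)\le C$ with $C$ depending only on $\|v_0\|_{L^2}$, $g_0$, $h_0$, $h_1$, $\mathcal{K}$; coercivity of $Q$ then forces $h(t)\in[-1+\alpha,1-\alpha]$ for a suitable $\alpha>0$.

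With these a priori bounds the maximal solution cannot blow up: the energy stays bounded and, by the previous step, $h$ remains in a compact subinterval of $(-1,1)$, so the local solution extends to $T^\ast=\infty$, yielding the global finite energy solution. For uniqueness and continuous dependence I would estimate the difference of two solutions in the energy norm; the subtlety is that they live on different free-boundary geometries, so their test-function spaces depend on the respective particle positions $h^{(i)}$. To compare them I would straighten the domains via an $h$-dependent diffeomorphism sending $(-1,h^{(i)})\cup(h^{(i)},1)$ onto a fixed reference configuration, reducing the problem to a fixed domain with $h$-dependent coefficients. Using the uniform energy bounds, the no-collision bound (which keeps the change-of-variables coefficients and their inverses non-degenerate), and the Lipschitz dependence of the feedback on $h$, a Gronwall argument controls the difference by the difference of the initial data, giving both uniqueness and the asserted continuity of the data-to-solution map on every $[0,T]$.
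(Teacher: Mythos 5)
Your skeleton for existence --- local solution by fixed point, the energy balance (the paper's \eqref{energy2}, quoted from \cite{tucsnak2015particle}), a Hesla-type tent test function producing a functional that is coercive at the walls, then continuation via a no-blow-up criterion --- is exactly the paper's route, and your uniqueness/continuous-dependence plan (straightening the free boundary and running Gronwall) is what the cited local theory does. For $\mathcal{K}=0$ your no-collision step is essentially the paper's Proposition~\ref{pro_global}: testing with the tent function $\varphi$ of \eqref{testing_function} gives an identity in which the wall-coercive quantity $\ln\frac{1+h(t)}{1+h_0}-\ln\frac{1-h(t)}{1-h_0}$ is bounded by the initial energy plus the time-integrated dissipation, hence a time-uniform $\alpha$.

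However, there is a genuine gap in the case $\mathcal{K}>0$, precisely where you claim to ``close this as $\dot V\le CV$, or simply $V(t)\le C$''. Testing against the tent function (whose trace at the particle is $l\equiv 1$) makes the feedback contribute
\begin{equation*}
\int_0^t \mathcal{K}\left(h_1-h(\sigma)\right)\,{\rm d}\sigma ,
\end{equation*}
a term that is neither sign-definite in a useful way nor controlled by the dissipation $\int_0^t\!\int_{-1}^1 v_y^2$; its only a priori bound is $2\mathcal{K}t$. Consequently this argument cannot give a time-uniform bound: it gives exactly the paper's \eqref{estimate_h_upper}--\eqref{estimate_h_lower}, where the distance from $h(t)$ to $\pm 1$ degrades like $\exp(-2\mathcal{K}t)$; and your fallback $\dot V\le CV$ is even weaker. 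The natural repair --- weighting the perturbation by $(h_1-h(t))$ so the feedback produces the good-sign term $-\mathcal{K}(h_1-h)^2$, as the paper does in Section~\ref{Global-expentional} --- is circular at this stage, because the resulting cross terms $g(t)/(1\pm h(t))$ can only be absorbed when the uniform separation $\alpha$ is already known. The paper breaks this circle with an ingredient your proposal is missing: a LaSalle-type compactness argument (Theorem~\ref{thm:large_time_behavior}, proved in the appendix) showing $h(t)\to h_1$ when $\mathcal{K}>0$, which uses only the $L^1$-in-time dissipation, the relative compactness of $(h(t))_{t\ge 0}$, the continuity of the flow map and the semigroup property; the uniform bound \eqref{estimateh} then follows by invoking this convergence for $t\ge T(h_1)$ and the time-degrading tent-function bound on the finite interval $[0,T(h_1)]$. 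Incidentally, your aside that $\mathcal{K}=0$ is the hardest case has the difficulty backwards: $\mathcal{K}=0$ is precisely when the tent-function estimate is already uniform in time, and $\mathcal{K}>0$ is where the extra large-time argument is indispensable.
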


\begin{remark}\label{rem_comp}{\rm
The fact that $h(t)\in (-1,1)$ for evert $t\geqslant 0$ is not surprising: this has been shown, in a slightly different
context and for smoother initial data in \cite{Vzquez2006LACKOC}. The novelty we bring in Theorem
\ref{global_exists} is that we show that the distance from} $h(t)$ {\rm to} $\{-1,1\}$ {\rm has a positive lower bound}.
\end{remark}

\newpage

\noindent Our second main result describes the large time behavior of the solutions obtained in Theorem \ref{global_exists}.

\begin{theorem}\label{global_exp}
With the notation and assumptions of Theorem \ref{global_exists} we have:
\begin{itemize}
	\item If $\mathcal{K}=0$ the finite energy solutions
of \eqref{conversition_of_mom_of_solid_without_control}-\eqref{inlitial_date_of_solid_without_control}
satisfies
	\begin{align}\label{exp_velocity}
		&\Vert v(t,\cdot)\Vert^2_{L^2(-1,1)}+g^2(t)
		\notag\\
		\leq &\exp\left(-\frac14 t\right)\left(	\Vert v_0\Vert^2_{L^2(-1,1)}+g^2_0\right)\qquad (t>0).
	\end{align}
Moreover, there exists $h^*\in (-1,1)$ such that 
\begin{equation}
	\vert h(t)-h^*\vert^2\leq \exp\left(-\frac14 t\right)\left(	\Vert v_0\Vert^2_{L^2(-1,1)}+g^2_0\right).
\end{equation}
\item If $\mathcal{K}>0$ there exists a constant $\eta_2 > 0$, depending  on $\Vert v_0 \Vert_{L^2(-1,1)}$, $g_0$, $h_0$, $h_1$ and $\mathcal{K}$, such that
\begin{align}\label{input_states_stability}
	&\Vert v(t,\cdot)\Vert^2_{L^2(-1,1)}+g^2(t)+\mathcal{K}(h(t)-h_1)^2
	\notag\\
	\leq &16\exp\left(-\eta_2 t\right)\left(	\Vert v_0\Vert^2_{L^2(-1,1)}+g^2_0+\mathcal{K}(h_0-h_1)^2\right)\qquad (t>0).
\end{align}
\end{itemize}
	
\end{theorem}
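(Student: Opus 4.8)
The plan is to hang both statements on the natural energy $E(t)=\tfrac12\|v(t,\cdot)\|_{L^2(-1,1)}^2+\tfrac12 g^2(t)+\tfrac{\mathcal K}2(h(t)-h_1)^2$ and its dissipation identity
\[
\dot E(t)=-\|v_y(t,\cdot)\|_{L^2(-1,1)}^2 .
\]
Formally this follows by multiplying \eqref{conversition_of_mom_of_solid_without_control} by $v$ and integrating over $(-1,h(t))$ and $(h(t),1)$ separately: the continuity of $v$ across $y=h(t)$ cancels the moving-interface terms, the cubic term vanishes because $v(\pm1)=0$ and $v$ is continuous at $h(t)$, the two integrations by parts leave $-g\,[v_y](t,h)-\|v_y\|^2$, and the surviving boundary contribution $-g\,[v_y](t,h)$ combines with the $g\dot g$ coming from the particle's kinetic energy and, through \eqref{newton_second_law_without_control} and \eqref{control_design}, becomes $gu=-\tfrac{\mathcal K}2\tfrac{d}{dt}(h-h_1)^2$. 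Rigorously I would obtain the identity by inserting $\psi=v$ in the weak formulation of Definition \ref{definition_of_finite_energy} after a regularization.

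Two one-dimensional inequalities then drive the decay. The Poincaré inequality on $H^1_0(-1,1)$ gives $\|v_y\|^2\ge\tfrac{\pi^2}4\|v\|^2$, while the \emph{tent function} $\phi_h\in H^1_0(-1,1)$ (piecewise affine, equal to $1$ at $y=h$ and to $0$ at $y=\pm1$) yields, by its variational characterization, the sharp trace bound $g^2=v(t,h)^2\le\tfrac{1-h^2}2\|v_y\|^2\le\tfrac12\|v_y\|^2$. I expect $\phi_h$ to be exactly the special test function announced in the abstract. When $\mathcal K=0$, writing $\|v_y\|^2=\tfrac12\|v_y\|^2+\tfrac12\|v_y\|^2\ge\tfrac{\pi^2}8\|v\|^2+g^2\ge\|v\|^2+g^2=2E$ gives $\dot E\le-2E$, hence $E(t)\le E(0)e^{-2t}$; this implies \eqref{exp_velocity}, with room to spare over the stated rate $1/4$. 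Then $|g(t)|\le(\|v_0\|^2+g_0^2)^{1/2}e^{-t}$ is integrable, so $h(t)\to h^*:=h_0+\int_0^\infty g$ and $|h(t)-h^*|\le\int_t^\infty|g|\le(\|v_0\|^2+g_0^2)^{1/2}e^{-t}$, which yields the claimed bound (again with margin, so the constant $1$ comes out).

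When $\mathcal K>0$ the difficulty is that $\|v_y\|^2$ does not see $(h-h_1)^2$, so $E$ cannot decay by itself; I would add a perturbation $V_\varepsilon=E+\varepsilon F$ producing dissipation in the $(h-h_1)$ direction. The functional $F$ is read off from the weak formulation tested against $\psi(t,y)=(h(t)-h_1)\phi_{h(t)}(y)$, for which $l(t)=\psi(t,h(t))=h(t)-h_1$, so the control term of Definition \ref{definition_of_finite_energy} equals $u\,l=-\mathcal K(h-h_1)^2$, precisely the missing dissipation. Taking $F=\int_{-1}^1 v\psi\,dy+gl$, differentiation produces $\dot F=-\mathcal K(h-h_1)^2+R$, where $R$ gathers $\int v\dot\psi$, $g\dot l$, $\int v_y\psi_y$ and $\int v^2\psi_y$. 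The crucial point is that the pointwise jump $[v_y](t,h)$ is never estimated directly: it is absorbed by the weak formulation. The terms in $R$ are controlled by $\|v_y\|^2$, $\|v\|^2$, $g^2$ and $(h-h_1)^2$ once one knows that $\|\phi_h'\|_{L^\infty}\le\tfrac1{1+h}+\tfrac1{1-h}$ stays bounded, and this is exactly where the no-collision estimate \eqref{estimateh} of Theorem \ref{global_exists} is indispensable.

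For $\varepsilon$ small (depending on $\mathcal K$ and $\alpha$) I expect to obtain simultaneously the equivalence $\tfrac14 E\le V_\varepsilon\le 4E$ and the differential inequality $\dot V_\varepsilon\le-c_1\|v_y\|^2-\varepsilon\mathcal K(h-h_1)^2$; replacing $\|v_y\|^2$ by $\|v\|^2+g^2$ through Poincaré and the trace bound then gives $\dot V_\varepsilon\le-\eta_2 V_\varepsilon$, so that $E(t)\le4V_\varepsilon(t)\le4V_\varepsilon(0)e^{-\eta_2 t}\le16\,E(0)e^{-\eta_2 t}$, which is exactly \eqref{input_states_stability}. The main obstacle, as this outline makes clear, is twofold: the rigorous justification of the identities for $\dot E$ and $\dot F$ at the level of finite-energy solutions (a density/regularization argument against the weak formulation), and the uniform-in-time absorption of the remainder $R$, both of which hinge on keeping $h$ strictly inside $(-1,1)$ via \eqref{estimateh}.
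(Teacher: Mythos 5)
Your proposal is correct and follows essentially the same route as the paper's proof: for $\mathcal{K}=0$ the energy dissipation identity combined with Poincar\'e and the tent-function trace bound gives exponential decay of $E$, after which integrating $g$ locates $h^*$ and yields the decay of $|h(t)-h^*|$; for $\mathcal{K}>0$ your perturbation $\varepsilon F = \varepsilon\bigl(h(t)-h_1\bigr)\bigl(\int_{-1}^{1}v\varphi\,{\rm d}y+g\bigr)$ is exactly the paper's $-\varepsilon(h_1-h(t))P(t)$, generating the missing $-\varepsilon\mathcal{K}(h(t)-h_1)^2$ dissipation through the feedback term, with the no-collision bound \eqref{estimateh} absorbing the remainders and Gronwall giving the factor $16$. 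The only (harmless) deviations are cosmetic: your energy carries factors $\tfrac12$, and you use the sharp trace constant $\tfrac{1-h^2}{2}$ where the paper settles for $g^2\leq 2\int_{-1}^{1}v_y^2\,{\rm d}y$.
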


The aforementioned theorems show, in particular, that when the feedback control \eqref{control_design} is active (that is $\mathcal{K}>0$)
the position of the particle tends to the prescribed ``target''  $h_1$, whereas when $\mathcal{K}=0$ it tends to an ``undetermined'' position $h^*$. By ``undetermined'' we mean that we are unable to describe $h^*$ in terms of the initial data.

The challenge of addressing the well-posedness question in this paper stems from two primary aspects. Firstly, the regularity of finite energy solutions necessitates avoiding the methodologies employed in \cite{vazquez2003large} and \cite{Vzquez2006LACKOC}. This constraint arises due to the specific properties of our solutions.
Secondly, our initial data and the parameter $\mathcal{K}$ are arbitrary, differing significantly from the setups in \cite{tucsnak2015particle} and \cite{Takahashi2015stabilization}. Consequently,  the lack of collisions between the particle and the boundary cannot be obtained just by proving that the solution remains small in an appropriate sense, as this has been done in \cite{tucsnak2015particle}.
Instead, we use a special test function introduced in Hesla \cite{Helsa2004PhD75H}.

An important novelty in Theorem \ref{global_exp} is that, unlike in \cite[Section 6]{tucsnak2015particle}, where it has been shown  that $\lim_{t\to \infty}h(t)=h_1$ we obtain  decay rate for $h_1 - h(t)$. An important ingredient in the proof of Theorem \ref{global_exp} is again the special test function mentioned above, which is used to construct a perturbed Lyapunov function.

The structure of this paper is as follows: In Section \ref{section_global} we prove the global existence and uniqueness result in Theorem \ref{global_exists}. Subsequently, in Section \ref{Global-expentional}, we give the proof of our global exponential stability result in  Theorem \ref{global_exp}.



\section{Global well-posedness of closed loop system}\label{section_global}
We begin by recalling a local in time existence and uniqueness result which is proved in \cite[Sections 2, 3 and 4]{tucsnak2015particle}

 \begin{proposition}\label{local_exists}
 	For every $\varepsilon>0$ and $\kappa>0$, we denote by $\mathcal{B}_{\varepsilon,\kappa}$ the set of
 $$\begin{bmatrix}
 		v_0\\
 		g_0\\
 		h_0\\
 	\end{bmatrix}\in L^2(-1,1)\times\mathbb{R}\times (-1,1)$$
  satisfying
 	\begin{align}
 		&\left\Vert v_0\right\Vert_{L^2(-1,1)}+g_0\leq \kappa,\\
 		&-1+4\varepsilon \leq h_0\leq 1-4\varepsilon.
 	\end{align}
 	Then there exists $T>0$, depending only on $\varepsilon$ and $\kappa$, such that for every $\begin{bmatrix}
 		v_0\\
 		g_0\\
 		h_0
 	\end{bmatrix}\in \mathcal{B}_{\varepsilon,\kappa}$, system \eqref{conversition_of_mom_of_solid_without_control}-\eqref{inlitial_date_of_solid_without_control} with feedback law \eqref{control_design} admits a unique finite energy solution (in the sense of Definition \ref{definition_of_finite_energy}) on the time interval $[0,T]$. Moreover, for every $t\in [0,T]$, we have the following energy estimate:
 \begin{align}\label{energy2}
 	&\int_{-1}^{1}v^2(t,y)\,{\rm d}y+g^2(t)+\mathcal{K}\left(h_1-h(t)\right)^2+2\int_{0}^{t}\int_{-1}^{1}v_y^2(\sigma, y)\,{\rm d}y\,{\rm d}\sigma\notag\\
 	= &\int_{-1}^{1}v_0^2(y)\,{\rm d}y+g^2_0+\mathcal{K}\left(h_1-h_0\right)^2,
 \end{align}
 \end{proposition}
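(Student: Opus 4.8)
The plan is to combine a fixed-point construction, carried out after flattening the free interface, with an energy identity obtained by using the solution itself as a test function in the weak formulation of Definition \ref{definition_of_finite_energy}. The two parts are largely independent: the fixed point yields existence and uniqueness on a short interval, while the testing argument yields the quantitative identity \eqref{energy2}.

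First I would remove the free boundary. Given a candidate trajectory $h\in C^1([0,T])$ with $h(0)=h_0$, $\dot h(0)=g_0$ and confined to the compact set $[-1+2\varepsilon,\,1-2\varepsilon]$, I would build a family of diffeomorphisms $\Phi_t\colon(-1,1)\to(-1,1)$ fixing $\pm1$ and sending $h(t)$ to $h_0$ (for instance piecewise affine in $y$, linear on $(-1,h(t))$ and on $(h(t),1)$), so that $\Phi_0=\mathrm{id}$ and $\Phi_t$ depends smoothly on $t$ as long as $h$ stays away from $\pm1$. In the variable $\xi=\Phi_t(y)$ the Burgers equation \eqref{conversition_of_mom_of_solid_without_control} becomes a semilinear parabolic equation with coefficients depending on $h$ and $\dot h$, posed on the fixed geometry $(-1,h_0)\cup(h_0,1)$ with the interface jump condition at $\xi=h_0$. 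The lower bound $4\varepsilon$ on the initial distance to $\{-1,1\}$ is precisely what guarantees that, on a short interval whose length depends only on $\varepsilon$ and $\kappa$, the transformed coefficients stay bounded and uniformly elliptic (they degenerate as $h\to\pm1$).

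Next I would set up the map $\mathcal F\colon h\mapsto\tilde h$: solve the transformed fluid equation to obtain $v$, recover the jump $[v_y]$ at the interface, and define $\tilde h$ by integrating Newton's law \eqref{newton_second_law_without_control} with $u=\mathcal K(h_1-h)$ and data $h(0)=h_0$, $\dot h(0)=g_0$. Using parabolic regularity estimates for $v$ in terms of the data and of $h$, together with the continuous dependence of $[v_y]$ on $h$, one chooses $T$ small (depending only on $\varepsilon$, which controls the admissible drift before collision, and on $\kappa$, which controls the size of the data hence of the quadratic nonlinearity) so that $\mathcal F$ maps a suitable closed ball into itself and is a contraction. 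Banach's fixed point theorem then gives a unique fixed point, and transporting back through $\Phi_t^{-1}$ yields a finite energy solution on $[0,T]$; uniqueness in the finite energy class follows by checking that any such solution is a fixed point of $\mathcal F$. I expect the main obstacle to be exactly the control of the transport term $v\,v_y$ without sacrificing the \emph{uniformity} of $T$ in the data, while simultaneously keeping the flattened coefficients nondegenerate over all of $[0,T]$.

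Finally, for the identity \eqref{energy2} I would insert the admissible pair $(\psi,l)=(v,g)$ into the weak formulation (legitimate after a density/regularization argument, since a generic finite energy solution only has $\dot v\in L^2(0,T;H^{-1})$). The two time-derivative blocks collapse to $\tfrac12\tfrac{d}{dt}\bigl(\|v\|_{L^2}^2+g^2\bigr)$, the viscous term contributes $\int_0^t\|v_y\|_{L^2}^2$, and the transport term $-\tfrac12\int_0^t\!\int v^2 v_y$ vanishes: splitting at $h(\sigma)$ and integrating gives $\tfrac13\bigl[v^3\bigr]$ on each subinterval, which cancels because $v$ is continuous across the interface (equal to $g$ there) and vanishes at $\pm1$. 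The right-hand side becomes $\int_0^t\mathcal K(h_1-h)\dot h\,{\rm d}\sigma=-\tfrac{\mathcal K}{2}\bigl[(h_1-h(t))^2-(h_1-h_0)^2\bigr]$, which moves to the left as the term $\mathcal K(h_1-h(t))^2$. Multiplying the resulting identity by $2$ reproduces \eqref{energy2} exactly. The delicate point here is the rigorous handling of the interface contributions, where one must verify that the jump terms coming from the viscous and transport parts recombine with the particle equation \eqref{newton_second_law_without_control} through the compatibility condition $\dot h=v(t,h(t))$, rather than being silently discarded.
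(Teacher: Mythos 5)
This proposition is not proved in the paper at all: it is explicitly recalled from \cite{tucsnak2015particle}, whose argument (Sections 2--4 there) follows exactly the route you describe — a change of variables flattening the moving interface onto a fixed domain, a contraction/fixed-point argument on a time interval whose length depends only on $\varepsilon$ and $\kappa$, and the energy identity \eqref{energy2} obtained by a regularized version of testing the weak formulation with the solution pair $(v,g)$ itself, where the cubic transport term cancels across the interface and the feedback term integrates to the $\mathcal{K}(h_1-h(t))^2$ contribution. Your proposal therefore matches the cited proof in all essentials, so there is nothing to correct beyond the standard care needed to identify an arbitrary finite energy solution with the fixed point (which is also how uniqueness is handled in \cite{tucsnak2015particle}).
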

\begin{remark}
	Let $\begin{bmatrix}  v\\  g\\  h\end{bmatrix}$ be the finite energy solution of system \eqref{conversition_of_mom_of_solid_without_control}-\eqref{inlitial_date_of_solid_without_control} on $[0,T]$, then the map
	\begin{equation}\label{continuity_new}
		\begin{bmatrix} v_0\\ g_0\\ h_0\end{bmatrix}\mapsto
		\begin{bmatrix}  v\\  g\\  h\end{bmatrix},
	\end{equation}
	is continuous from $\mathcal{B}_{\kappa,\varepsilon}$ to
	$\left\{\mathcal{C}([0,T];L^2(-1,1))\cap
	L^2((0,T);\mathcal{H}_0^1(-1,1)) \right\}\times L^2(0,T)\times
	\mathcal{H}^1(0,T)$.
\end{remark}
As a consequence of the above result we have:

\begin{corollary}\label{corollary_contraction}
	Assume that for every $\tau>0$, a finite energy solution of \eqref{conversition_of_mom_of_solid_without_control}-\eqref{inlitial_date_of_solid_without_control} defined on $[0,\tau)$
satisfies
	\begin{equation}
		\sup_{t \in [0,\tau)}\vert h(t)\vert <1.
	\end{equation}
Then the considered finite energy solution is global.
\end{corollary}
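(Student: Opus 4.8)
The plan is to run a continuation (global continuation) argument showing that the maximal existence time of the finite energy solution is infinite, the two driving ingredients being the uniform-in-time energy bound supplied by \eqref{energy2} and the uniform boundary-distance bound supplied by the hypothesis. First I would set $T^{*}=\sup\{T>0:\text{the solution exists on }[0,T]\}$ and argue by contradiction, assuming $T^{*}<\infty$. By the uniqueness statement in Proposition \ref{local_exists}, the solutions on $[0,T]$ for $T<T^{*}$ coincide on overlaps and patch into a single finite energy solution defined on $[0,T^{*})$; applying the hypothesis with $\tau=T^{*}$ then yields $M:=\sup_{t\in[0,T^{*})}\vert h(t)\vert<1$.

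Next I would extract the two uniform bounds that place the state in some fixed ball $\mathcal{B}_{\varepsilon,\kappa}$. Discarding the nonnegative dissipation and spring terms in the energy identity \eqref{energy2} gives $\Vert v(t)\Vert_{L^2(-1,1)}^{2}+g^{2}(t)\leq E_{0}$ for all $t\in[0,T^{*})$, where $E_{0}:=\Vert v_0\Vert_{L^2(-1,1)}^{2}+g_0^{2}+\mathcal{K}(h_1-h_0)^{2}$ depends only on the initial data; hence $\Vert v(t)\Vert_{L^2(-1,1)}+\vert g(t)\vert\leq\kappa:=\sqrt{2E_{0}}$. Setting $\varepsilon:=(1-M)/4>0$, the hypothesis gives $-1+4\varepsilon\leq h(t)\leq 1-4\varepsilon$. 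Thus for every $t\in[0,T^{*})$ the triple $\begin{bmatrix} v(t)\\ g(t)\\ h(t)\end{bmatrix}$ lies in $\mathcal{B}_{\varepsilon,\kappa}$, with $\varepsilon$ and $\kappa$ independent of $t$.

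Finally I would invoke the uniform local existence time $T_{0}=T(\varepsilon,\kappa)>0$ from Proposition \ref{local_exists}, which crucially depends only on $\varepsilon$ and $\kappa$ and not on the particular datum. Choosing a restart time $t_{0}\in(T^{*}-T_{0},T^{*})$ and solving the system with feedback \eqref{control_design} from the admissible state $\begin{bmatrix} v(t_0)\\ g(t_0)\\ h(t_0)\end{bmatrix}\in\mathcal{B}_{\varepsilon,\kappa}$, I obtain a finite energy solution on $[t_{0},t_{0}+T_{0}]$; uniqueness then lets me glue it to the original solution, producing one on $[0,t_{0}+T_{0}]$ with $t_{0}+T_{0}>T^{*}$, which contradicts the definition of $T^{*}$. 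Therefore $T^{*}=\infty$ and the solution is global.

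The hard part is ensuring the restart step does not degenerate: the local existence time must stay bounded away from zero along the whole trajectory, and this is exactly what the $(\varepsilon,\kappa)$-only dependence of $T_{0}$ in Proposition \ref{local_exists} guarantees once both bounds above are made uniform in $t$. A secondary technical point is that $g$ a priori belongs only to $L^2(0,T)$, so the pointwise restart value $g(t_{0})$ needs justification; I would handle this by using that the energy identity \eqref{energy2} holds for every $t$ and that $v\in C([0,T];L^2(-1,1))$, so the state is admissible at (almost) every $t_{0}$, and I am free to pick such a $t_{0}$ in $(T^{*}-T_{0},T^{*})$.
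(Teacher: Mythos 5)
Your proof is correct and takes exactly the route the paper intends: the paper states Corollary \ref{corollary_contraction} without proof, as an immediate consequence of Proposition \ref{local_exists}, and your continuation argument — uniform $\kappa$ from the energy identity \eqref{energy2}, uniform $\varepsilon$ from the non-collision hypothesis, and a restart near the putative blow-up time using the fact that the local existence time depends only on $(\varepsilon,\kappa)$ — is precisely the standard argument being left implicit. Your attention to the pointwise meaning of $g(t_0)$ and to gluing via uniqueness fills in details the paper omits.
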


Proposition  below is the essential ingredient of our global well-posedness result:

\begin{proposition}\label{pro_global}
	Let $v_0\in L^2(-1,1)$, $g_0\in \mathbb{R}$, $h_0,\, h_1\in (-1,1)$.
Moreover, let $\tau>0$ and let $\begin{bmatrix}
 		v\\
 		g\\
 		h\\
 	\end{bmatrix}$
a finite energy solution of \eqref{conversition_of_mom_of_solid_without_control}-\eqref{inlitial_date_of_solid_without_control} defined on $[0,\tau)$.
 Then for every $t\in [0,\tau)$, the position of particle of system \eqref{conversition_of_mom_of_solid_without_control}-\eqref{inlitial_date_of_solid_without_control} with feedback law in \eqref{control_design} statisfies
	\begin{equation}\label{estimate_h}
		-1+\kappa_1(t) \leq h(t)\leq 1-\kappa_2(t) \qquad \qquad (t\in [0,\tau)),
	\end{equation}
where
\begin{align*}
	&\kappa_1(t)=\frac{2}{1+\max\left\{2,\frac{{1-h_0}}{1+h_0}\right\}\exp\left(C+2\mathcal{K}t\right)} &(& t\in [0,\tau)),\\ &\kappa_2(t)=\frac{2}{1+\max\left\{2,\frac{1+h_0}{1-h_0}\right\}\exp\left(\left(C+2\mathcal{K}t\right)\right)}    &(& t\in [0,\tau)),\\
	&C=10\left(\Vert v_0\Vert_{L^2(-1,1)}^2+g^2_0+\mathcal{K}(h_1-h_0)^2+\sqrt{\Vert v_0\Vert_{L^2(-1,1)}^2+g^2_0+\mathcal{K}(h_1-h_0)^2}\right).
\end{align*}
\end{proposition}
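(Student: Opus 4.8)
The plan is to control the two ratios
\[
\Phi_-(t)=\frac{1-h(t)}{1+h(t)},\qquad \Phi_+(t)=\frac{1+h(t)}{1-h(t)},
\]
because the desired lower bound $h(t)\ge -1+\kappa_1(t)$ is, after a one-line rearrangement, equivalent to $\Phi_-(t)\le M_1\exp(C+2\mathcal Kt)$ with $M_1=\max\{2,\tfrac{1-h_0}{1+h_0}\}$, and likewise $h(t)\le 1-\kappa_2(t)$ is equivalent to $\Phi_+(t)\le M_2\exp(C+2\mathcal Kt)$ with $M_2=\max\{2,\tfrac{1+h_0}{1-h_0}\}$. I will carry out the argument for $\Phi_-$; the bound for $\Phi_+$ then follows verbatim after the reflection $y\mapsto-y$, $h\mapsto-h$. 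The device throughout is the Hesla-type test function
\[
\psi(t,y)=
\begin{cases}
\dfrac{1+y}{1+h(t)}, & -1\le y\le h(t),\\[2mm]
\dfrac{1-y}{1-h(t)}, & h(t)\le y\le 1,
\end{cases}
\]
the continuous piecewise-affine ``hat'' with $\psi(t,\pm1)=0$ and $\psi(t,h(t))=1$, so that $l(t)\equiv1$ is an admissible trace with $\dot l\equiv0$. Since $\psi_y=\tfrac1{1+h}$ on $(-1,h)$ and $\psi_y=-\tfrac1{1-h}$ on $(h,1)$, using $v(t,\pm1)=0$ and $v(t,h(t))=g(t)$ one gets the key identity
\[
\int_{-1}^{1}v_y(t,\cdot)\,\psi_y(t,\cdot)\,{\rm d}y=\frac{g(t)}{1+h(t)}+\frac{g(t)}{1-h(t)}=\frac{2g(t)}{1-h^2(t)}.
\]
As $\dot h=g$ gives $\tfrac{d}{dt}\log\Phi_-=-\tfrac{2g}{1-h^2}$, the fundamental theorem of calculus yields $\log\Phi_-(t)-\log\Phi_-(0)=-\int_0^t\int_{-1}^{1}v_y\psi_y\,{\rm d}y\,{\rm d}\sigma$.

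Next I will insert this $\psi$ (with $l\equiv1$, $\dot l\equiv0$) into the weak identity of Definition \ref{definition_of_finite_energy} to express $\int_0^t\int v_y\psi_y$ through the remaining terms, and bound each of them by $2\mathcal Kt+C$. The feedback produces the exponential rate: with $u=\mathcal K(h_1-h)$ and $|h-h_1|\le2$ we have $-\int_0^t u\,{\rm d}\sigma=\mathcal K\int_0^t(h-h_1)\,{\rm d}\sigma\le 2\mathcal Kt$. The delicate terms are those carrying the singular factors $\tfrac1{1\pm h}$, namely $\tfrac12\int v^2\psi_y$ and $\int v\dot\psi$ (here $\dot\psi$ contains $\tfrac{g}{(1+h)^2}$ and $\tfrac{g}{(1-h)^2}$). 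The point is that all of them are absorbed by the viscous dissipation: using $v(t,\pm1)=0$ together with the Poincaré-type bound $|v(y)|^2\le(1+y)\int_{-1}^{h}v_y^2$ on the left sub-interval (and its mirror on the right) one shows, for instance, $\tfrac1{2(1+h)}\int_{-1}^{h}v^2\le\tfrac12\int_{-1}^{h}v_y^2$ and $\tfrac{|g|}{(1+h)^2}\int_{-1}^{h}(1+y)|v|\le\tfrac45\int_{-1}^{h}v_y^2$, with symmetric estimates on $(h,1)$, since $1\pm h\le2$. The energy identity \eqref{energy2} then gives $\int_0^t\int v_y^2\le\tfrac12 E_0$ with $E_0=\Vert v_0\Vert_{L^2}^2+g_0^2+\mathcal K(h_1-h_0)^2$, while the time-boundary contributions $\int v(t)\psi(t)$, $\int v_0\psi(0)$, $g(t)$, $g_0$ are $O(\sqrt{E_0})$ because $|\psi|\le1$, $\Vert v(t)\Vert_{L^2}^2\le E_0$ and $g^2\le E_0$. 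Collecting everything into the generous constant $C=10(E_0+\sqrt{E_0})$ gives $-\int_0^t\int v_y\psi_y\le 2\mathcal Kt+C$, and since $\log\Phi_-(0)\le\log M_1$ we conclude $\Phi_-(t)\le M_1\exp(C+2\mathcal Kt)$, i.e.\ the stated lower bound on $h(t)$; the reflected computation (now $+\int_0^t u\le2\mathcal Kt$) gives the upper bound.

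The main obstacle is the admissibility of this moving test function. Since $\psi$ depends on time only through $h$, its derivative $\dot\psi$ involves $\tfrac{g}{(1\pm h)^2}$ and lies in $L^2((0,t);L^2(-1,1))$ only so long as $h$ stays away from $\{-1,1\}$, which is precisely what we are trying to establish. I will circumvent this by a continuation argument: all the estimates above are uniform in the distance of $h$ to the wall, so on the largest subinterval of $[0,\tau)$ on which $h$ takes values in a compact subset of $(-1,1)$ the bound $\Phi_\pm(t)\le M_\pm\exp(C+2\mathcal Kt)$ holds; this bound keeps $h$ strictly interior and hence forbids that subinterval from being proper, so the estimate propagates to all of $[0,\tau)$. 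A secondary technical point, namely justifying $\tfrac{d}{dt}\log\Phi_\pm=\mp\tfrac{2g}{1-h^2}$ and the fundamental theorem of calculus, is harmless once $h\in H^1(0,\tau)$ is known to remain in that compact set, since then $\log\Phi_\pm$ is absolutely continuous.
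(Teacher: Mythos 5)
Your proof is correct and follows essentially the same route as the paper: the same Hesla hat test function plugged into Definition \ref{definition_of_finite_energy}, the same identity turning $\int_0^t\int_{-1}^{1} v_y\psi_y\,{\rm d}y\,{\rm d}\sigma$ into the logarithmic quantities $\ln\frac{1\pm h(t)}{1\pm h_0}$, the same absorption of the singular terms $\int v\dot\psi$ and $\tfrac12\int v^2\psi_y$ into the dissipation via the energy identity \eqref{energy2}, and the same constant $C=10\left(E_0+\sqrt{E_0}\right)$. Your explicit continuation argument for the admissibility of the moving test function (a point the paper leaves implicit, though continuity of $h\in H^1$ on compact subintervals of $[0,\tau)$ essentially settles it) is a welcome extra precision, not a different method.
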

\begin{proof}
  Following the approach proposed in \cite[Section II]{Helsa2004PhD75H}, we begin by choosing the test function $\begin{bmatrix} \psi\\ l\end{bmatrix} =\begin{bmatrix}
		\varphi\\
		1
	\end{bmatrix}$ in the Definition \ref{definition_of_finite_energy},
where $\varphi$ is defined by:
\begin{equation}\label{testing_function}
	\varphi(t,y)=\begin{cases}
		\frac{1}{1+h(t)}(y+1) & \qquad\qquad(y\in (-1,h(t))),\\
		\frac{1}{1-h(t)}(1-y)                       & \qquad\qquad(y\in(h(t),1)).	
	\end{cases}
\end{equation}
It follows that
\begin{align}\label{intergrate_form}
	&\int_{-1}^{1}v(t,y)\varphi(t,y)\,{\rm d}y
	-
	\int_{-1}^{1}v_0(y)\varphi(0,y)\,{\rm d}y
	-
	\int_{0}^{t}\int_{-1}^{1}v(\sigma, y)\frac{\partial \varphi}{\partial \sigma}(\sigma, y)\,{\rm d}y\,{\rm d}\sigma\notag\\
	&+\int_{0}^{t} \int_{-1}^{1} v(\sigma, y) v_y(\sigma,y) \varphi(\sigma, y) \,{\rm d}y\,{\rm d}\sigma+g(t)-g_0+\int_{0}^{t}\int_{-1}^{1}v_y(\sigma, y)\varphi_y(\sigma, y)\notag\\
	=&\int_{0}^{t}\mathcal{K}(h_1-h(\sigma))\,{\rm d}\sigma,
\end{align}

Moreover, using the defintion of $\varphi$ in \eqref{testing_function} and the fact that $v(\sigma,h(\sigma))=\dot h(\sigma)$ for almost every $t\in [0, \tau),\;\sigma\in [0,\tau)$, we have
\begin{align}\label{caculate_v_xx}
	&\int_{-1}^{1}v_y(\sigma,y)\varphi_y(\sigma,y)\,{\rm d}y\notag\\
	=&\int_{-1}^{h(\sigma)}v_y(\sigma,y)\varphi_y(\sigma,y) \,{\rm d}y+\int_{h(\sigma)}^{1}v_y(\sigma,y)\varphi_y(\sigma,y)\,{\rm d}y\notag\\
	=&\frac{\dot{h}(\sigma)}{1+h(\sigma)}+\frac{\dot{h}(\sigma)}{1-h(\sigma)}\notag\\
	=&\frac{\rm d}{{\rm d} \sigma}(\ln (1+h(\sigma)))-\frac{\rm d}{{\rm d} \sigma}(\ln (1-h(\sigma))).
\end{align}

Using \eqref{defintion_of_P}, \eqref{defintion_of_A1}, \eqref{defintion_of_A2} and \eqref{caculate_v_xx}, we can rewrite \eqref{intergrate_form} as
\begin{align}\label{equality_of_h}
	&\ln \frac{1+h(t)}{1+h_0}-\ln \frac{1-h(t)}{1-h_0}\notag\\
	=&\int_{0}^{t}\mathcal{K}(h_1-h(\sigma))\,{\rm d}\sigma+P(0)-P(t)+\int_{0}^{t}A_1(\sigma ){\rm d}\sigma-\int_{0}^{t} A_2(\sigma )\,{\rm d}\sigma,
\end{align}
where, for almost every $t\in [0,\tau),\; \sigma \in [0,t]$, we have set
\begin{align}
	\label{defintion_of_P}&P(\sigma )=\int_{-1}^{1}\varphi(\sigma,y) v(\sigma,y)\,{\rm d}y+g(\sigma),\\
    \label{defintion_of_A1}&A_1(\sigma )=\int_{-1}^{1}v(\sigma,y)\frac{\partial \varphi}{\partial \sigma}(\sigma,y)\,{\rm d}y,\\
	\label{defintion_of_A2}&A_2(\sigma )=\int_{-1}^{1} v(\sigma,y)v_y(\sigma,y)\varphi(\sigma,y)\,{\rm d}y.
\end{align}
We use next the energy estimate \eqref{energy2} to estimate the four last terms in the right hand side of \eqref{equality_of_h}.

Firstly, we remark that for $P$ defined in \eqref{defintion_of_P} we can use \eqref{energy2}, the Cauchy-Schwarz inequality
and the fact that $|\varphi|$ is uniformly bounded by $1$  to get that
\begin{align}\label{estimate_P}
	\vert P (\sigma)\vert^2 \leq 4\left(\Vert v_0\Vert_{L^2(-1,1)}^2+g^2_0+\mathcal{K}\left(h_1-h_0\right)^2\right)\qquad\qquad (t\in [0,\tau),\ \sigma \in [0,t]).
\end{align}

To estimate $A_1$, we note that  from \eqref{defintion_of_A1} and the definition of $\varphi$ in \eqref{testing_function}, for $t\in [0,\tau))$ and almost every $\sigma\in [0,t]$ we have:
\begin{equation}\label{with_name}
	A_1(\sigma )=-\frac{g(\sigma)}{(1+h(\sigma))^2}\, \int_{-1}^{h(\sigma)}v(\sigma,y)(1+y)\, {\rm d}y
	+\frac{g(\sigma)}{(1-h(\sigma))^2}\, \int_{h(\sigma)}^{1}
	v(\sigma,y)(1-y)\, {\rm d}y.
\end{equation}
To estimate the first term the right hand side of the above formula, we firstly integrate by parts  to get
\begin{align}
	& \frac{g(\sigma)}{(1+h(\sigma))^2}\int_{-1}^{h(\sigma)}v(\sigma,y)(1+y)\,{\rm d}y \notag\\
	= &\frac{g(\sigma)}{2(1+h(\sigma))^2}\int_{-1}^{h(\sigma)}v(\sigma,y) \frac{\rm d}{{\rm d}y} \left[ (1+y)^2\right]
\, {\rm d}y\notag\\
	= &
	- \frac{g(\sigma)}{2(1+h(\sigma))^2}
	\int_{-1}^{h(\sigma)}v_y(\sigma,y) (1+y)^2\,{\rm d}y+
	\frac12 g^2(\sigma)
	\quad \left(t\in [0,\tau),\; \sigma \in [0,t] \quad {\rm a.e.}\right),
\end{align}
Moreover, by combining the Cauchy-Schwarz inequality, a trace theorem, the facts $\vert 1+h(\sigma)\vert \leqslant 2$ and $v(t,h(t))=g(t)$, we have
\begin{align*}
	&\left\vert
	 \frac{g(\sigma)}{2(1+h(\sigma))^2}
	\int_{-1}^{h(\sigma)} v_y(\sigma, y) (1+y)^2 \, \mathrm{d}y - \frac{1}{2}g^2(\sigma)
	\right\vert
	 \\
	 \leq {}&\frac{\vert g(\sigma) \vert }{2(1+h(\sigma))^2}
	 \left(
	 \int_{-1}^{h(\sigma)} v_y^2(\sigma, y) \, \mathrm{d}y
	 \right)^{\frac{1}{2}}
	\left(
	 \int_{-1}^{h(\sigma)} (1+y)^4 \, \mathrm{d}y
	 \right)^{\frac{1}{2}}
	 +\frac{1}{2}g^2(\sigma)\notag\\
	\leq{} &\frac{1}{\sqrt{5}}\vert g(\sigma)\vert (1+h(\sigma))^{\frac{1}{2}}
	\left( \int_{-1}^{h(\sigma)} v_y^2(\sigma, y) \, \mathrm{d}y \right)^{\frac{1}{2}} + \frac{1}{2}g^2(\sigma) \\
	\leq{} &g(\sigma) \left( \int_{-1}^{h(\sigma)} v_y^2(\sigma, y) \, \mathrm{d}y \right)^{\frac{1}{2}} + \frac{1}{2}g^2(\sigma) \quad  \\
	\leq{} &2g^2(\sigma) + 2 \int_{-1}^{h(\sigma)} v_y^2(\sigma, y) \, \mathrm{d}y  \\
	\leq{} &6\int_{-1}^{h(\sigma)} v_y^2(\sigma, y) \, \mathrm{d}y  \qquad\qquad\qquad(t\in [0,\tau),\ \sigma\in [0,t]\; {\rm a.e.}).
\end{align*}
Combining the above formula and \eqref{estimateA1} it follows that
\begin{align}\label{estimateA1}
&\left|\frac{g(\sigma)}{(1+h(\sigma))^2}\int_{-1}^{h(\sigma)}v(\sigma,y)(1+y)\,{\rm d}y\right|\notag\\
\leqslant& 6\int_{-1}^{h(\sigma)} v_y^2(\sigma, y) \, \mathrm{d}y  \qquad\qquad(t\in [0,\tau),\ \sigma\in [0,t]\; {\rm a.e.}).
\end{align}
In a completely similar manner we can estimate the second term in the right hand side of left term of \eqref{with_name}  to get
\begin{equation*}\label{A1_second}
\left|\frac{g(\sigma)}{(1-h(\sigma))^2}\int_{h(\sigma)}^1 v(\sigma,y)(1-y)\,{\rm d}y\right|
\leqslant 6\int_{h(\sigma)}^1 v_y^2(\sigma, y) \, \mathrm{d}y  \qquad\qquad(t\in [0,\tau),\ \sigma\in [0,t]\; {\rm a.e.}).
\end{equation*}
Putting together the last two inequalities and using \eqref{energy2}, it follows that
\begin{align}\label{estimate_of_A_1}
&\left\vert \int_{0}^{t}A_1(\sigma ){\rm d}\sigma\right\vert
\leq\int_{0}^{t}\vert A_1(\sigma )\vert {\rm d}\sigma\notag\\
\leq &6\int_{0}^{t}\int_{-1}^{1} v^2_y(\sigma,y)\,{\rm d}y\, {\rm d}\sigma
\leq 6 \left(\Vert v_0\Vert_{L^2(-1,1)}^2+g^2_0+\mathcal{K}\left(h_1-h_0\right)^2\right)\qquad \left(t\in [0,\tau)\right).
\end{align}
 We next estimate of $A_2$ defined in \eqref{defintion_of_A2}. Since $\varphi$ defined by \eqref{testing_function} is uniformly bounded by $1$, usingthe Cauchy-Schwarz  and Poincar{\'e} inequalities, it follows that for every $t\in [0,\tau)$ and almost every $\sigma\in [0,t]$, we have
\begin{align}\label{estimate_of_A2}
	&\vert A_2(\sigma)\vert
	\leq \int_{-1}^{1}\vert v(\sigma, y)v_y(\sigma,y)\phi(\sigma,y)\vert\, {\rm d}y\notag\\
	\leq& \left(\int_{-1}^{1} v^2(\sigma,y)\,{\rm d}y\right)^{\frac12}\left(\int_{-1}^{1} v_y^2(\sigma,y)\,{\rm d}y\right)^{\frac12}
	\leq4 \int_{-1}^{1}v^2_y(\sigma ,y)\,{\rm d}y.
\end{align}
From the above estimate and the energy estimate \eqref{energy2} we obtain:
\begin{align}\label{estimate_A2}
	&\left\vert \int_{0}^{t}A_2(\sigma ){\rm d}\sigma\right\vert
	\leq 4\int_{0}^{t}\int_{-1}^{1}v^2_y(\sigma ,y)\,{\rm d}y\,{\rm d}\sigma\notag\\
	\leq& 4\left(\Vert v_0\Vert_{L^2(-1,1)}+g^2_0+\mathcal{K}\left(h_1-h_0\right)^2\right) \qquad \qquad (t\in[0,\tau)).
\end{align}
Moreover, using the fact $\vert h(\sigma)-h_1\vert\leq 2$, for every $t\in (0,\tau),\;\sigma \in [0,t]$, it follows that
\begin{equation}\label{estimate_hu}
	\left\vert \int_{0}^{t}\mathcal{K}(h_1-h(\sigma)){\rm d}\sigma\right\vert \leq 2\mathcal{K}t.
\end{equation}
 By combing  \eqref{estimate_P}, \eqref{estimate_of_A_1}, \eqref{estimate_A2}, \eqref{estimate_hu} and \eqref{equality_of_h}, we have
\begin{align}
	\ln \frac{1+h(t)}{1+h_0}-\ln \frac{1-h(t)}{1-h_0}\leq C+2\mathcal{K}t \qquad \quad (t\in[0,\tau)),
\end{align}
where $C=10\left(\Vert v_0\Vert_{L^2(-1,1)}^2+g^2_0+\mathcal{K}(h_1-h_0)^2+\sqrt{\Vert v_0\Vert_{L^2(-1,1)}^2+g^2_0+\mathcal{K}(h_1-h_0)^2}\right)$.

After a simple caculation, the above inequality can be rephrased to
\begin{equation}\label{estimate_h_upper}
	h(t)\leq 1-\frac{2}{1+\max\left\{2,\frac{1+h_0}{1-h_0}\right\}\exp\left(C+2\mathcal{K}t\right)}\qquad\qquad (t\in[0,\tau)).
\end{equation}
Choosing next the test function $\begin{bmatrix}
	\psi\\
	l
\end{bmatrix}=\begin{bmatrix}
	-\varphi\\
	1
\end{bmatrix}$ in Defintion \ref{definition_of_finite_energy} and following step by step the procedure used to prove \eqref{estimate_h_upper}, it follows that:
\begin{equation}\label{estimate_h_lower}
h(t)\geq -1+\frac{2}{1+\max\left\{2,\frac{1-h_0}{1+h_0}\right\}\exp\left(C+2\mathcal{K}t\right)}   \qquad\qquad (t\in[0,\tau)).
\end{equation}
\end{proof}
 By combining  Corollary \ref{corollary_contraction} and Proposition \ref{pro_global} it follows that

 \begin{corollary}\label{cor_ex_glob}
 Let $\mathcal{K}\geqslant 0$. Then for every $v_0\in L^2(-1,1),\ g\in \mathbb{R}$ and $h_0,\ h_1\in (-1,1)$ the system \eqref{conversition_of_mom_of_solid_without_control}-\eqref{inlitial_date_of_solid_without_control} with feedback law \eqref{control_design} admits  unique global finite energy solution.
 \end{corollary}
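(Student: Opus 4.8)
The plan is to obtain the global solution by a standard continuation argument, feeding the a priori bound of Proposition \ref{pro_global} into the local theory of Proposition \ref{local_exists} (equivalently, into Corollary \ref{corollary_contraction}). First I would construct a maximal solution: by Proposition \ref{local_exists} a finite energy solution exists on some interval $[0,T_0)$, and by local uniqueness any two such solutions coincide on their common interval of definition, so they glue into a single solution defined on a maximal interval $[0,T_{\max})$.

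Next I would show $T_{\max}=\infty$. The key observation is that on any finite interval $[0,\tau)$ with $\tau<\infty$ the trajectory $h$ stays in a compact subinterval of $(-1,1)$. Indeed, Proposition \ref{pro_global} gives $-1+\kappa_1(t)\le h(t)\le 1-\kappa_2(t)$, and since $\kappa_1,\kappa_2$ are positive, continuous and nonincreasing in $t$ (the denominators being nondecreasing because $\mathcal{K}\ge 0$), they are bounded below on $[0,\tau]$ by $\kappa_1(\tau),\kappa_2(\tau)>0$; hence $\sup_{t\in[0,\tau)}|h(t)|\le 1-\min\{\kappa_1(\tau),\kappa_2(\tau)\}<1$. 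This is precisely the hypothesis of Corollary \ref{corollary_contraction}, which then yields that the solution is global, i.e. $T_{\max}=\infty$.

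If one prefers to argue the continuation explicitly rather than cite Corollary \ref{corollary_contraction}, I would proceed by contradiction, assuming $T_{\max}<\infty$. The energy identity \eqref{energy2} bounds $\|v(t)\|_{L^2(-1,1)}+|g(t)|$ on $[0,T_{\max})$ by a constant depending only on the data, while the preceding bound keeps $h(t)$ at distance at least $\min\{\kappa_1(T_{\max}),\kappa_2(T_{\max})\}>0$ from $\{-1,1\}$. Thus for every $t_0<T_{\max}$ the state $(v(t_0),g(t_0),h(t_0))$ lies in one fixed set $\mathcal{B}_{\varepsilon',\kappa'}$, with $\varepsilon',\kappa'$ independent of $t_0$. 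Proposition \ref{local_exists} then provides a fixed existence time $T'>0$ from any such datum; choosing $t_0$ with $T_{\max}-t_0<T'$ extends the solution beyond $T_{\max}$, contradicting maximality. Uniqueness of the global solution follows from the local uniqueness in Proposition \ref{local_exists} together with the gluing step.

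The step I expect to be the main conceptual obstacle is the loss of uniformity in time: when $\mathcal{K}>0$ one has $\kappa_1(t),\kappa_2(t)\to 0$ as $t\to\infty$, so Proposition \ref{pro_global} does not by itself furnish a single positive lower bound for the distance of $h$ to the boundary valid for all $t$. This is, however, harmless for global existence, which only requires reaching each finite horizon; the monotonicity of $\kappa_1,\kappa_2$ supplies the needed positive lower bound on every compact interval. Recovering the genuinely uniform separation \eqref{estimateh} claimed in Theorem \ref{global_exists} is a separate point, to be obtained later from the exponential decay of the energy.
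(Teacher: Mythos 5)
Your proposal is correct and takes essentially the same route as the paper, which obtains Corollary \ref{cor_ex_glob} precisely by feeding the a priori bounds of Proposition \ref{pro_global} into the continuation criterion of Corollary \ref{corollary_contraction}. The extra details you supply (the maximal-interval construction, the explicit contradiction argument, and the observation that the decay of $\kappa_1,\kappa_2$ as $t\to\infty$ only matters for the uniform separation \eqref{estimateh}, not for global existence) simply spell out what the paper leaves implicit.
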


We also need the  result below, which asserts that for $\mathcal{K}>0$ the equilibriul state $\begin{bmatrix} 0 \\0\\ h_1\end{bmatrix}$
is asymptotically stable.

 \begin{theorem}\label{thm:large_time_behavior}
 Let $\mathcal{K}> 0$. Then for every $v_0\in L^2(-1,1),\ g\in \mathbb{R}$ and $h_0,\ h_1\in (-1,1)$,
 the unique global finite energy solution of \eqref{conversition_of_mom_of_solid_without_control}-\eqref{inlitial_date_of_solid_without_control} with feedback law \eqref{control_design} satisfies
 	\begin{align}
		\lim_{t \to \infty} \|v(t, \cdot)\|_{L^2(-1,1)} = 0, \qquad
		\lim_{t \to \infty} g(t) = 0, \qquad
		\lim_{t \to \infty} h(t) = h_1.
	\end{align}
\end{theorem}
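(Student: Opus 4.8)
The plan is to treat
\[
E(t):=\|v(t,\cdot)\|_{L^2(-1,1)}^2+g^2(t)+\mathcal{K}\big(h(t)-h_1\big)^2
\]
as a Lyapunov function and to harvest convergence from the dissipation it produces. By the energy identity \eqref{energy2}, $E$ is nonincreasing and
\[
E(t)+2\int_0^t\|v_y(\sigma,\cdot)\|_{L^2(-1,1)}^2\,\mathrm d\sigma=E(0),
\]
so $E(t)\downarrow E_\infty\ge 0$ and $\int_0^\infty\|v_y\|_{L^2}^2\,\mathrm d\sigma\le E(0)/2<\infty$. Since $v(\sigma,-1)=0$, for a.e.\ $\sigma$ we have $g(\sigma)=v(\sigma,h(\sigma))=\int_{-1}^{h(\sigma)}v_y(\sigma,y)\,\mathrm dy$, whence $g^2(\sigma)\le 2\|v_y(\sigma,\cdot)\|_{L^2}^2$ and, by Poincar\'e, $\|v(\sigma,\cdot)\|_{L^2}^2\le C_P\|v_y(\sigma,\cdot)\|_{L^2}^2$; in particular $g\in L^2(0,\infty)$. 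From $\int_0^\infty\|v_y\|_{L^2}^2<\infty$ there is a sequence $t_n\to\infty$ with $\|v_y(t_n,\cdot)\|_{L^2}\to0$, hence $\|v(t_n,\cdot)\|_{L^2}\to0$ and $g(t_n)\to0$. Consequently $E_\infty=\lim_n E(t_n)=\mathcal K\lim_n\big(h(t_n)-h_1\big)^2$, so the whole theorem reduces to proving $E_\infty=0$, equivalently $h(t)\to h_1$.

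Next I would record that $\int_0^t(h_1-h(\sigma))\,\mathrm d\sigma$ stays bounded as $t\to\infty$. In identity \eqref{equality_of_h} the left-hand side equals $F(h(t))-F(h_0)$ with $F(s)=\ln\frac{1+s}{1-s}$, which is bounded because $h(t)\in[-1+\alpha,1-\alpha]$ by Theorem \ref{global_exists}; the term $P(t)$ is bounded by \eqref{estimate_P}; and by \eqref{estimate_of_A_1}, \eqref{estimate_A2} together with $\int_0^\infty\|v_y\|_{L^2}^2<\infty$ the integrals $\int_0^t A_1$ and $\int_0^t A_2$ are bounded (in fact convergent). Solving \eqref{equality_of_h} for $\mathcal K\int_0^t(h_1-h)$ and using $\mathcal K>0$ yields $\sup_{t\ge0}\big|\int_0^t(h_1-h(\sigma))\,\mathrm d\sigma\big|<\infty$.

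The crux — and the step I expect to be the main obstacle — is to upgrade this subsequential information into the genuine limit $h(t)\to h_1$; the difficulty is that $g\in L^2$ (rather than $L^1$) does not by itself rule out a slow oscillation of $h$ about $h_1$. I would argue by contradiction, assuming $E_\infty>0$. Fix $\epsilon>0$ small and set $\mathcal G=\{\sigma:\|v_y(\sigma,\cdot)\|_{L^2}^2<\epsilon\}$. On $\mathcal G$ one has $\|v\|_{L^2}^2+g^2\le(C_P+2)\epsilon$, so $E(\sigma)\ge E_\infty$ forces $\mathcal K(h(\sigma)-h_1)^2\ge E_\infty-(C_P+2)\epsilon\ge E_\infty/2$ once $\epsilon$ is small; thus $|h(\sigma)-h_1|\ge d:=\sqrt{E_\infty/(2\mathcal K)}>0$ on $\mathcal G$. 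Hence every instant at which $h$ lies in the forbidden band $(h_1-d,h_1+d)$ belongs to $\mathcal G^c$, whose tail measure $\mu_T=|\mathcal G^c\cap[T,\infty)|\le\epsilon^{-1}\int_T^\infty\|v_y\|_{L^2}^2$ and tail dissipation $\nu_T=2\int_T^\infty\|v_y\|_{L^2}^2$ both tend to $0$. If on some interval $I\subset[T,\infty)$ the trajectory $h$ crossed the band from one edge to the other, then $\int_I|g|\ge2d$, so Cauchy–Schwarz gives $\int_I g^2\ge4d^2/|I|\ge4d^2/\mu_T$, while $\int_I g^2\le\nu_T$; this is impossible once $\mu_T\nu_T<4d^2$, i.e.\ for $T$ large. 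Therefore $h$ performs no complete crossing after some $T'$ and, being unable to remain in the band on a set of infinite measure, it stays on one side, say $h(t)\le h_1-d$ whenever $t\ge T'$ and $t\in\mathcal G$. Splitting $\int_{T'}^t(h_1-h)$ over $\mathcal G$ and $\mathcal G^c$ and using $|h_1-h|\le2$ on the latter then gives $\int_0^t(h_1-h)\to+\infty$, contradicting the boundedness established above.

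This contradiction forces $E_\infty=0$, and then $E(t)\to0$ delivers simultaneously $\|v(t,\cdot)\|_{L^2}\to0$, $g(t)\to0$ and $h(t)\to h_1$, which is the assertion. The only routine care needed is with the ``for a.e.\ $t$'' nature of the pointwise-in-time trace and Poincar\'e bounds and with selecting the times $T,T'$ inside $\mathcal G$ so that the one-sidedness conclusion is clean.
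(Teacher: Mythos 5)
Your reduction of the theorem to $E_\infty=0$ and your crossing argument ruling out $E_\infty>0$ are correct, and they follow a genuinely different route from the paper: the paper proves this theorem in the Appendix by a LaSalle-type invariance argument (first $\|v(t,\cdot)\|_{L^2}\to 0$, $g(t)\to 0$ via Proposition \ref{LYAP_PRIMA}, then a subsequential limit $h(t_n)\to h^*$, and finally the semigroup property together with the continuity of the solution map force the dissipation of the solution issuing from $[0,\,0,\,h^*]$ to vanish identically, whence $h^*=h_1$ from \eqref{newton_second_law_without_control} and \eqref{control_design}). However, your argument contains one genuine logical flaw, and it sits exactly where you placed the second step: to bound the left-hand side of \eqref{equality_of_h} you invoke the uniform bound $h(t)\in[-1+\alpha,1-\alpha]$ of \eqref{estimateh} in Theorem \ref{global_exists}. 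In the paper, that bound in the case $\mathcal{K}>0$ (the case at hand) is \emph{deduced from} Theorem \ref{thm:large_time_behavior} --- precisely the statement you are proving --- so your citation is circular. What is legitimately available to you at this point is only global existence (Corollary \ref{cor_ex_glob}) and the time-degrading bounds \eqref{estimate_h_upper}--\eqref{estimate_h_lower} of Proposition \ref{pro_global}, whose lower bounds $\kappa_1(t),\kappa_2(t)$ tend to $0$ as $t\to\infty$ when $\mathcal{K}>0$; these do not give the uniform-in-time bound your argument needs.

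Fortunately the gap can be closed inside your own framework, using nothing beyond \eqref{equality_of_h} itself. Write $F(s)=\ln\frac{1+s}{1-s}$ and $I(t)=\int_0^t\bigl(h_1-h(\sigma)\bigr)\,\mathrm{d}\sigma$, so that \eqref{equality_of_h} reads $F(h(t))-F(h_0)=\mathcal{K}I(t)+B(t)$, where $B(t)=P(0)-P(t)+\int_0^tA_1-\int_0^tA_2$ satisfies $|B(t)|\leq M$ uniformly in $t$ by \eqref{estimate_P}, \eqref{estimate_of_A_1}, \eqref{estimate_A2} and the finiteness of the total dissipation --- none of which requires $h$ to stay away from $\pm 1$. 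Since $F$ is an increasing bijection of $(-1,1)$ onto $\mathbb{R}$, whenever $\mathcal{K}I(t)>F(h_1)-F(h_0)+M$ one gets $F(h(t))>F(h_1)$, hence $h(t)>h_1$ and $\dot I(t)=h_1-h(t)<0$; symmetrically, whenever $\mathcal{K}I(t)<F(h_1)-F(h_0)-M$ one gets $\dot I(t)>0$. As $I(0)=0$ and $I$ is $C^1$ (because $h$ is continuous), $I$ can never escape the interval determined by these two thresholds, so $\sup_{t\geq 0}|I(t)|<\infty$; feeding this back into the identity bounds $|F(h(t))|$, i.e.\ keeps $h$ uniformly away from $\pm 1$. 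This single lemma simultaneously supplies both facts you need (boundedness of $\int_0^t(h_1-h)$ and of the left-hand side of \eqref{equality_of_h}), and with it your proof becomes correct and self-contained. Indeed, it then has some advantages over the paper's: it is quantitative, it avoids the semigroup property and the continuous dependence on initial data, and it furnishes an independent proof of \eqref{estimateh} for $\mathcal{K}>0$ rather than consuming it.
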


The proof of the above result can be done by closely following  the procedure in \cite[Section 6]{tucsnak2015particle}.
To be more precise, the result in \cite[Section 6]{tucsnak2015particle} required some smallness assumptions on the initial data,
but those assumptions were used there only to be sure that the solutions are global. Since now this fact is known from
Corollary \ref{cor_ex_glob}, the conclusion of Theorem \ref{thm:large_time_behavior} holds with no smallness assumption
on the initial data. However, for the sake of completeness, we provide a complete proof of Theorem \ref{thm:large_time_behavior} in the  Appendix in Section \ref{appendix}.

We are now in a position to prove the main result of this section.

 \begin{proof}[Proof of Theorem \ref{global_exists}]
 The existence and uniqueness of finite energy solutions has already been proven in
 Corollary \ref{cor_ex_glob}. We thus have only to prove \eqref{estimateh}.
 To this aim, we remark that
	\begin{enumerate}\label{estimate_K_equal_0}
		\item If $\mathcal{K} = 0$, from \eqref{estimate_h_upper} and \eqref{estimate_h_lower}, we have
		\begin{equation}\label{estimate_h_K_0}
			-1 + \frac{2}{1 + \max\left\{2, \frac{1 - h_0}{1 + h_0}\right\} \exp(C)} \leq h(t) \leq 1 - \frac{2}{1 + \max\left\{2, \frac{1 + h_0}{1 - h_0}\right\} \exp(C)},
		\end{equation}
		which directly implies estimate \eqref{estimateh}.
		
		\item If $\mathcal{K} > 0$, from Theorem~\ref{thm:large_time_behavior}, we have
		\begin{equation*}
			\lim_{t \to \infty} h(t) = h_1,
		\end{equation*}
so that there exists $T = T(h_1) > 0$ with
		\begin{equation}
			|h_1 - h(t)| \leq \frac{1}{8}(1 - h_1),
		\end{equation}
		which implies that for every $t > T(h_1)$,
		\begin{equation}
			1 - h(t) \geq \frac{7}{8}(1 - h_1).
		\end{equation}
		For $t \in (0, T(h_1))$, from \eqref{estimate_h_upper}, we have
		\begin{equation}
			h(t) \leq 1 - \frac{2}{1 +\max\left\{2,\frac{1 + h_0}{1 - h_0}\right \}  \exp(C + 2\mathcal{K}T(h_1))}.
		\end{equation}
		Therefore,
		\begin{equation}
			h(t) \leq 1 - \alpha,
		\end{equation}
		where
		\begin{align*}
			&C=10\left(\Vert v_0\Vert_{L^2(-1,1)}^2+g^2_0+\mathcal{K}(h_1-h_0)^2+\sqrt{\Vert v_0\Vert_{L^2(-1,1)}^2+g^2_0+\mathcal{K}(h_1-h_0)^2}\right),\\
			&\alpha = \min\left\{\frac{7}{8}(1 - h_1), \; \frac{2}{1 + \frac{1 - h_0}{1 + h_0} \exp(C + 2\mathcal{K}T(h_1))}\right\}.
		\end{align*}
		
		The lower bound of $h(t)$ can be obtained similarly, completing the proof.
	\end{enumerate}
\end{proof}

\section{Global exponential stability}\label{Global-expentional}

In this section, we give the proof of Theorem \ref{global_exp}.

\begin{proof}[Proof of Theorem \ref{global_exp}]
	 To begin let $\varepsilon$ be such that
\begin{equation}\label{pripa_eps}
0  \leq  \varepsilon\leq \min\left\{\frac18, \frac{\mathcal{K}}{8}\right\},
\end{equation}
and  we define
	\begin{align}\label{Lyapunov_functions_of_interaction_system}
		V_\varepsilon(t)
		=&	\int_{-1}^{1}v^2(t,y)\,{\rm d}y+g^2(t)+\mathcal{K}\left(h(t)-h_1\right)^2\notag\\
		&-\varepsilon\left(h_1-h(t)\right) P(t) \qquad\qquad\qquad \left(t\geqslant 0\right),
	\end{align}
	where  $\begin{bmatrix}
		v\\
		g\\
		h
	\end{bmatrix}$ is the global finite energy solution of system \eqref{conversition_of_mom_of_solid_without_control}-\eqref{inlitial_date_of_solid_without_control} with feedback \eqref{control_design}, which has been defined in Definition \ref{definition_of_finite_energy} and $P$ has been defined in \eqref{defintion_of_P}.

	Meanwhile, for every $t\geq 0$, we also define
	\begin{equation}\label{definition_of_energy}
		E(t)=\int_{-1}^{1}v^2(t,y)\,{\rm d}y+g^2(t)+\mathcal{K}\left(h(t)-h_1\right)^2\qquad (\mathcal{K}\geq 0).
	\end{equation}
	By combining  the Cauchy-Schwarz inequality and the fact $\vert\varphi\vert$ is uniform bounded by $1$, it is straightforward to verify that,
	\begin{equation}\label{norm_inequality2}
		\frac14 E(t)\leq V_{\varepsilon}(t)\leq 2E(t),
		\qquad
		\frac14 V_{\varepsilon}(t) \leq E(t)\leq 2 V_{\varepsilon}(t) \qquad\qquad (t\geq 0).
	\end{equation}
We next remark that $V_{\varepsilon}(t)$ is differentiable with respect to $t$.
Indeed, from the energy estimate given by \eqref{energy2} and Theorem \ref{global_exists}, we deduce that the mapping
\begin{equation}
	t \mapsto \frac{1}{2}\left(\int_{-1}^{1} v^2(t,y) \, \mathrm{d}y + g^2(t) + \mathcal{K}(h(t) - h_1)^2\right)
\end{equation}
is absolutely continuous and differentiable for almost every $t \in [0, \infty)$. Consequently, by taking the derivative of \eqref{definition_of_energy} with respect to $t$, we obtain
\begin{align}\label{energyestimatesection5}
	\frac{{\rm d}}{\rm d t}E(t)=-2\int_{0}^{1}v^2_y(t,y)\,{\rm d}y\qquad \qquad ( t\geq 0\;{\rm a.e.}).
\end{align}
On the other hand, from \eqref{equality_of_h} it follows that $P$ is absolutely continuous on every bounded interval contained in $[0,\infty)$,
thus differentiable almost everywhere on $[0,\infty)$. Moreover, from \eqref{defintion_of_P} it also follows that for almost every $t\geqslant 0$
we have
\begin{align}\label{derivative_expression}
	&\frac{\mathrm{d}P}{\mathrm{d}t}(t)
	= A_1(t) - A_2(t) - \int_{-1}^{1}v_y(t,y)\varphi_y(t,y) \, \mathrm{d}y + \mathcal{K}(h_1 - h(t))\notag\\
	=& A_1(t) - A_2(t) + \mathcal{K}(h_1 - h(t)) - \left(\frac{g(t)}{1+h(t)} + \frac{g(t)}{1-h(t)}\right),
\end{align}
where $A_1$ and $A_2$ have been defined in \eqref{defintion_of_A1} and \eqref{defintion_of_A2}, respectively.

 From \eqref{energyestimatesection5} and \eqref{derivative_expression} we get that $V_\varepsilon(t)$ defined by \eqref{Lyapunov_functions_of_interaction_system} is absolutely continuous, thus differentiable for almost every $t \in [0, \infty)$, with
 \begin{align}
 	\frac{\mathrm{d}}{\mathrm{d}t}V_\varepsilon(t)
 	=& -2\int_{-1}^{1} v_y^2(t,y) \, \mathrm{d}y
 	 + \varepsilon g(t) \left( g(t) + \int_{0}^{1} \varphi(t,y) v(t,y) \, \mathrm{d}y \right)  \notag \\
 	& - \varepsilon \left( h_1 - h(t) \right) \left( A_1(t) - A_2(t) + \mathcal{K}(h_1 - h(t)) - \left( \frac{g(t)}{1 + h(t)} + \frac{g(t)}{1 - h(t)} \right) \right).
 \end{align}
Moreover, we know from \eqref{estimateh} that there exists a constant $\alpha \in (0,1]$, such that
\begin{equation*}
	\min\{ 1-h(t), \;1+h(t)\}\geq \alpha\qquad (t\geq 0).
\end{equation*}
The above estimate, combined with an elementary  inequality, shows that for every $t\geq 0$ we have
\begin{equation*}
	\left(h_1-h(t)\right)\left( \frac{g(t)}{1 + h(t)} + \frac{g(t)}{1 - h(t)} \right)\leq \frac{g^2(t)}{\mathcal{K}\alpha^2}+\frac{\mathcal{K}}{4}(h_1-h(t))^2\qquad (\mathcal{K}>0).
\end{equation*}
From the above inequality it follows that for every $\varepsilon$ satisfying \eqref{pripa_eps}
and almosts every $t\geqslant 0$ and $\mathcal{K}>0$ we have
\begin{align*}
\frac{\mathrm{d}}{\mathrm{d}t}V_{\varepsilon}(t)
	\leq& \left(-\int_{-1}^{1}v_y^2(t,y)\,\mathrm{d}y - \varepsilon(h_1 - h(t))A_1(t)
+ \varepsilon(h_1 - h(t))A_2(t)\right)\\
&+ \left(5 + \frac{1}{\mathcal{K}\alpha^2}\right)\varepsilon g^2(t)
	- \frac{3\mathcal{K}\varepsilon}{4}(h_1 - h(t))^2.
\end{align*}

		Using the inequality above and the fact that $|h(t) - h_1| \leq 2$, along with the estimates of $A_1$ and $A_2$ in \eqref{estimateA1}, \eqref{estimate_A2}, respectively, we have
		\begin{align}\label{are_acuma}
			\frac{\mathrm{d}}{\mathrm{d}t}V_{\varepsilon}(t) \leq \left(-2 + 20\varepsilon\right)\int_{-1}^{1}v_y^2(t,y)\,\mathrm{d}y + \left(5 + \frac{1}{\mathcal{K}\alpha^2}\right)\varepsilon g^2(t) - \frac{3\mathcal{K}\varepsilon}{4}(h_1 - h(t))^2.
		\end{align}
Using the estimates
	\begin{align}\label{here_label}
		&g(t)=v(t,h(t))=\int_{-1}^{h(t)}v_y(t,y)\,{\rm d}y\leq \left(\int_{-1}^{1}v_y^2(t,y)\,{\rm d}y\right)^\frac{1}{2}\left(\int_{-1}^{1}1\,{\rm d}y\right)^\frac{1}{2}\notag\\
		\leq& \sqrt{2}\left(\int_{-1}^{1}v_y^2(t,y)\,{\rm d}y\right)^\frac{1}{2},
	\end{align}	
inequality \eqref{are_acuma} implies that for almost every $t\geqslant 0$ we have
\begin{align}
&\frac{\mathrm{d}}{\mathrm{d}t}V_{\varepsilon}(t) \leq \left(-2 + 20\varepsilon\right)\int_{-1}^{1}v_y^2(t,y)\,\mathrm{d}y + \left(5 + \frac{1}{\mathcal{K}\alpha^2}\right)\varepsilon g^2(t) - \frac{3\mathcal{K}\varepsilon}{4}(h_1 - h(t))^2\notag\\
=& \left(-2 + 20\varepsilon\right)\int_{-1}^{1}v_y^2(t,y)\,\mathrm{d}y + \left(7 + \frac{1}{\mathcal{K}\alpha^2}\right)\varepsilon g^2(t)-2\varepsilon g^2(t) - \frac{3\mathcal{K}\varepsilon}{4}(h_1 - h(t))^2\notag\\
\leq &\left(-1+20\varepsilon+2\left(7 + \frac{1}{\mathcal{K}\alpha^2}\right)\varepsilon\right)\int_{-1}^{1}v_y^2(t,y)\,{\rm d}y-2\varepsilon g^2(t)- \frac{3\mathcal{K}\varepsilon}{4}(h_1 - h(t))^2.
\end{align}
		We choose $\varepsilon = \frac{1}{16\left(34 + \frac{2}{\mathcal{K}\alpha^2}\right)}$, which satisfies  \eqref{pripa_eps}. Combining this with the Poincaré inequality, we obtain
		\begin{align}
			\frac{\mathrm{d}}{\mathrm{d}t}V_{\varepsilon}(t) \leq -\frac{1}{4}\left[1 - \left(34 + \frac{2}{\mathcal{K}\alpha^2}\right)\varepsilon\right]\int_{-1}^{1}v^2(t,y)\,\mathrm{d}y - 2\varepsilon g^2(t) - \frac{3\mathcal{K}\varepsilon}{4}(h_1 - h(t))^2.
		\end{align}
		
		Denote
		\begin{equation*}
			\eta = \frac{1}{4} \min\left\{ \frac{1}{34 + \frac{2}{\mathcal{K}\alpha^2}}, \; \frac{3\mathcal{K}\varepsilon}{4} \right\}.
		\end{equation*}
		
		Using the above inequality and \eqref{norm_inequality2}, for almost every $t \geq 0$, we have
		\begin{equation}
			\frac{\mathrm{d}}{\mathrm{d}t}V_{\varepsilon}(t) \leq -4\eta E(t) \leq -\eta V_{\varepsilon}(t),
		\end{equation}
		where $E(t)$ is defined by \eqref{definition_of_energy}.
		
		By Gronwall's inequality, for $\mathcal{K} > 0$, we obtain
		\begin{equation}
			V_{\varepsilon}(t) \leq \exp(-\eta t)V_{\varepsilon}(0).
		\end{equation}
		
		Using \eqref{norm_inequality2} again, we conclude that if $\mathcal{K}>0$ then
		\begin{equation}\label{energy_decay_K_positive}
			E(t) \leq 16\exp(-\eta t)E(0) \qquad\qquad (t \geqslant 0).
		\end{equation}
		
		For $\mathcal{K} = 0$, using \eqref{energyestimatesection5} and \eqref{here_label} for almost every $t\geq 0$, we have
		\begin{equation}\label{energy_K_0}
			\frac{\mathrm{d}}{\mathrm{d}t}E(t) = -2\int_{0}^{1}v_y^2(t,y)\,\mathrm{d}y \leq -\int_{0}^{1}v_y^2(t,y)\,\mathrm{d}y -\frac12 g^2(t).
		\end{equation}
		
		Applying the Poincaré inequality,  the above estimate implies that
		\begin{equation}
			\frac{\mathrm{d}}{\mathrm{d}t}E(t) \leq -\frac14E(t)\qquad \qquad (t\geq 0)
		\end{equation}
		
		By Gronwall's inequality, it follows that
		\begin{equation*}
			E(t) \leq \exp\left(-\frac14t\right)E(0), \qquad (t \geq 0).
		\end{equation*}
		The above estimate implies that
		\begin{equation}\label{estimate_g}
			\vert g(t)\vert\leq \exp\left(-\frac18 t\right)\sqrt{E(0)}.
		\end{equation}
 Next for any $\varepsilon>0$, we choose $T(\varepsilon)=16\ln \frac{\sqrt{E_0}}{4\varepsilon}$, then, for every $t_2>t_1\geq T(\varepsilon)$, using Newton-Leibniz formula we have
	\begin{align*}
		&\vert h(t_2)-h(t_1)\vert=\left\vert \int_{t_2}^{t_1}g(s)\,{\rm d}s\right\vert \leq \int_{t_2}^{t_1}\vert g(s)\vert \,{\rm d}s\notag\\
		\leq&\int_{t_2}^{t_1}\exp\left(-\frac18 t\right)\sqrt{E(0)} \,{\rm d}s\leq \frac{\sqrt{E_0}}{8}\left(\exp\left(-\frac18 t_1\right)-\exp\left(-\frac18 t_2\right)\right)\notag\\
		\leq &\frac{\sqrt{E_0}}{8}\exp\left(-\frac18 T(\varepsilon)\right)<\varepsilon,
	\end{align*}
by combining the above with \eqref{estimate_h_K_0}, there exists $h^*\in (-1,1)$, such that
\begin{equation}
	\lim_{t\to \infty}h(t)=h^*.
\end{equation}
Moreover, using \eqref{estimate_g} and above equality, we have
\begin{equation*}
	\vert h(t)-h^*\vert \leq \int_{t}^{\infty}\vert g(s)\vert\, {\rm d}s\leq \frac{\sqrt{E_0}}{8}\exp\left(-\frac18 t\right).
\end{equation*}
 \eqref{energy_decay_K_positive}, \eqref{energy_K_0} and the above inequality allow us to end the proof of Theorem \ref{global_exp}.
	\end{proof}

{\bf Acknowledgments.}
Funded by the European Union (Horizon Europe MSCA project Modconflex, grant number 101073558).
\section{Appendix}\label{appendix}
This appendix is devoted to the proof of Theorem \ref{thm:large_time_behavior}. This proof is essentially the same as the one proposed
in \cite{tucsnak2015particle} for a weaker version of this result, where it was assumed that the initial data satisfy some smallness conditions. Nevertheless, in order to convince the reader that the result holds without these assumptions, we give the proof below.

We first introduce the functions $W_1,\ W_2:L^2(-1,1) \times \mathbb{R}
\times (-1,1)\to [0,\infty)$ defined  by
\begin{align}&\label{LYAP1}
	W_1\begin{bmatrix}  \varphi \\ g\\
		h\end{bmatrix}=\frac12\left(\int_{-1}^{1}\varphi^2 \,{\rm d}y+
	|g|^2\right) &(&\varphi\in L^2(-1,1), \ g\in\mathbb{R},\ h\in
	(-1,1)),\\
&\label{LYAP2}
	W_2 \begin{bmatrix}  \varphi \\ g\\ h\end{bmatrix}=\frac{\mathcal{K}}{2}
	|h-h_1|^2 &(&\varphi\in L^2(-1,1), \ g\in\mathbb{R},\ h\in
	(-1,1)).
\end{align}
We also consider $D : \mathcal{H}_0^1(-1,1) \times
\mathbb{R} \times (-1,1)\to [0,\infty)$ defined  by
\begin{equation}\label{LYAP_DISIP}
	D \begin{bmatrix}  \varphi \\ g\\ h\end{bmatrix} =
	\int_{-1}^{1}\varphi_y^2(y)\, {\rm d}y  \qquad(\varphi\in
	{H}_0^1(-1,1), \ g\in\mathbb{R},\ h\in (-1,1)).
\end{equation}
Moreover, for $v_0\in L^2(-1,1)$, $g_0\in \mathbb{R}$ and
$h_0\in (-1,1)$ we set
\begin{equation}\label{notatie_desteapta}
	S(t)\begin{bmatrix} v_0\\ g_0\\ h_0\end{bmatrix} = \begin{bmatrix} v(t,\cdot)\\ g(t)\\ h(t)\end{bmatrix} \qquad(t\geqslant 0),
\end{equation}
where $\begin{bmatrix} v\\ g\\ h\end{bmatrix}$ is the corresponding finite energy solution of \eqref{conversition_of_mom_of_solid_without_control}-\eqref{inlitial_date_of_solid_without_control} constructed in Theorem \ref{global_exists}.
From Theorem \ref{global_exists} it follows that
to say that
\begin{equation}\label{continuity_very_new}
	S(t) \in C\left(L^2(-1,1)\times \mathbb{R}\times (-1,1);\;L^2(-1,1)\times \mathbb{R}\times (-1,1)\right) \qquad(t\geqslant 0),
\end{equation}
and that, for every $T>0$, the map
\begin{equation}\label{alta_timpenie}
	\begin{bmatrix}v_0\\ g_0\\ h_0\end{bmatrix}\mapsto D\left(S(\cdot)\begin{bmatrix}v_0\\ g_0\\ h_0\end{bmatrix}\right)
\end{equation}
is continuous from $L^2(-1,1)\times \mathbb{R}\times (-1,1)$ to $L^1(0,T)$.

With the above notation, the energy estimate \eqref{energy2}
can be written:
\begin{align}\label{ENCUW}
	&W_1\begin{bmatrix} v_0\\ g_0\\ h_0\end{bmatrix}+W_2\begin{bmatrix}
		v_0\\ g_0\\ h_0\end{bmatrix}-W_1\left(S(t)\begin{bmatrix} v_0\\
		g_0\\ h_0\end{bmatrix}\right)-W_2\left(S(t)\begin{bmatrix} v_0\\
		g_0\\ h_0\end{bmatrix}\right)\notag\\
	=&\int_0^t
	D\left(S(\sigma)\begin{bmatrix} v_0\\ g_0\\
		h_0\end{bmatrix}\right)\, {\rm d}\sigma  \qquad\qquad\qquad(t\geqslant 0).
\end{align}

\begin{proposition}\label{LYAP_PRIMA}
	Under the assumptions of Theorem \ref{global_exists} and with the above notation, for every $v_0\in L^2(-1,1)$, $g_0\in
	\mathbb{R}$ and $h_0\in (-1,1)$  we have
	$$
	\lim_{t\to\infty} W_1\left(S(t)\begin{bmatrix} v_0\\ g_0\\
		h_0\end{bmatrix}\right) = 0.
	$$
\end{proposition}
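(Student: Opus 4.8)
The plan is to read off from the energy balance \eqref{ENCUW} a strong integrability property of $t\mapsto W_1(S(t))$ and then promote it to a genuine limit by controlling the one-sided growth of this map. The conceptual difficulty to keep in mind is that $W_1$ by itself is \emph{not} monotone: only $W_1+W_2$ is nonincreasing along the flow, so the decay of $W_1$ cannot be extracted directly from \eqref{ENCUW}. Since $W_1$ and $W_2$ take nonnegative values, \eqref{ENCUW} bounds the dissipation integral on its right-hand side by the finite initial energy, uniformly in $t$, so that
\[
\int_0^\infty\!\!\int_{-1}^1 v_y^2(\sigma,y)\,{\rm d}y\,{\rm d}\sigma<\infty .
\]
Combining the Poincaré inequality on $H^1_0(-1,1)$ with the trace estimate \eqref{here_label}, which yields $g^2(\sigma)\le 2\int_{-1}^1 v_y^2(\sigma,y)\,{\rm d}y$, I obtain a constant $c>0$ such that $W_1(S(\sigma))\le c\,D(S(\sigma))$ for a.e.\ $\sigma$; hence $t\mapsto W_1(S(t))\in L^1(0,\infty)$.

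The second step is a one-sided bound on the derivative of $W_1(S(\cdot))$. Exactly as in the proof of Theorem \ref{global_exp}, $E(t)=2W_1(S(t))+2W_2(S(t))$ is absolutely continuous with $\frac{{\rm d}}{{\rm d}t}E=-2\int_{-1}^1 v_y^2\,{\rm d}y$ by \eqref{energyestimatesection5}, while $W_2(S(t))=\frac{\mathcal K}{2}(h(t)-h_1)^2$ is absolutely continuous (because $h\in H^1$) with $\frac{{\rm d}}{{\rm d}t}W_2=\mathcal K(h-h_1)g$. Consequently $W_1(S(\cdot))$ is absolutely continuous and
\[
\frac{{\rm d}}{{\rm d}t}W_1(S(t))=-\int_{-1}^1 v_y^2(t,y)\,{\rm d}y-\mathcal K\bigl(h(t)-h_1\bigr)g(t)\le 2\mathcal K\,|g(t)|\le 2\mathcal K\sqrt{E(0)},
\]
where I used $|h-h_1|\le 2$ and the energy bound $g^2(t)\le E(t)\le E(0)$ coming from \eqref{energy2}.

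Finally I would conclude with a one-sided Barbalat argument: if $\phi\ge 0$ is absolutely continuous, $\phi\in L^1(0,\infty)$ and $\phi'\le M$ a.e., then $\phi(t)\to 0$. Indeed, if $\phi(t_n)\ge\epsilon$ along some $t_n\to\infty$ (which may be chosen with gaps exceeding $\epsilon/(2M)$), the inequality $\phi(t)\ge\phi(t_n)-M(t_n-t)$ forces $\phi\ge\epsilon/2$ on disjoint intervals of length $\epsilon/(2M)$, contradicting $\phi\in L^1$. Applying this to $\phi(t)=W_1(S(t))$ proves the proposition. The step I expect to be the crux is the derivative estimate of the second paragraph: the absence of monotonicity of $W_1$ means everything hinges on bounding $\frac{{\rm d}}{{\rm d}t}W_1(S(t))$ from \emph{above} (a lower bound is both false and unnecessary). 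A more structural alternative, fitting the apparatus $(S(t),W_1,W_2,D)$ assembled before the statement, would be a LaSalle invariance argument identifying $W_1\equiv 0$ on the $\omega$-limit set; there the main obstacle would instead be establishing precompactness of the trajectory in $L^2(-1,1)\times\mathbb{R}\times(-1,1)$, which is delicate in this infinite-dimensional, free-boundary setting.
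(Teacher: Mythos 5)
Your proof is correct, and it rests on the same two pillars as the paper's: (i) the integrability $W_1(S(\cdot))\in L^1(0,\infty)$, which you obtain exactly as the paper does from \eqref{ENCUW}, Poincar\'e's inequality and the trace bound \eqref{here_label}; and (ii) the fact that $W_1$ can only increase at the bounded rate at which $W_2$ decreases, since $|h(t)-h_1|\leq 2$ and $g^2(t)\leq E(0)$ by \eqref{energy2}. Where you genuinely diverge is in how (ii) is implemented. You differentiate: absolute continuity of $E$ (your appeal to \eqref{energyestimatesection5} creates no circularity, since that identity follows from \eqref{energy2} and global existence, i.e.\ Corollary \ref{cor_ex_glob}, both available before Proposition \ref{LYAP_PRIMA}) together with absolute continuity of $W_2$ (from $h\in H^1$) gives the a.e.\ bound $\tfrac{\rm d}{{\rm d}t}W_1(S(t))\leq 2\mathcal{K}\sqrt{E(0)}$, and you conclude with a one-sided Barbalat lemma. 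The paper never differentiates $W_1$: it argues by contradiction, extracting $t_n\to\infty$ with $W_1(t_n)\geq\varepsilon$, using the $L^1$ bound to force the lengths $\delta_n$ of the maximal intervals ending at $t_n$ on which $W_1\geq\varepsilon/2$ to tend to zero, transferring the resulting drop of $W_1$ over $[t_n-\delta_n,t_n]$ to $W_2$ via monotonicity of $W_1+W_2$, and then applying the mean value theorem to $t\mapsto|h(t)-h_1|^2$ to produce points where $|\dot h|\geq\varepsilon/(4\delta_n)\to\infty$, contradicting the uniform bound on $g$. The underlying mechanism is identical — your lemma's proof is, in effect, the paper's $\delta_n$-argument run in the abstract — but your packaging buys something: it isolates a reusable quantitative lemma, makes explicit that only an \emph{upper} bound on $\tfrac{\rm d}{{\rm d}t}W_1(S(t))$ is needed, handles $\mathcal{K}=0$ trivially (then $W_1(S(\cdot))$ is nonincreasing), and avoids the somewhat delicate bookkeeping in the paper's definition of $\delta_n$ (which, as literally written there, needs the interval interpretation for $\sum_n\delta_n<\infty$ to follow from $W_1\in L^1$). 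Conversely, the paper's formulation sidesteps any discussion of the differentiability of $W_1(S(\cdot))$, though the regularity you invoke is established and used elsewhere in the paper anyway. Your alternative suggestion of a LaSalle argument is indeed closer in spirit to how the paper proves Theorem \ref{thm:large_time_behavior} itself (identification of the limit $h^*=h_1$), not this proposition.
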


\begin{proof}
	Let
	$$
	W_k\left(S(t)\begin{bmatrix} v_0\\ g_0\\
		h_0\end{bmatrix}\right)=W_k(t) \qquad(k\in\{1,2\},\ \ t\geqslant
	0).
	$$
We use a contradiction argument.
	
If  $W_1$ does not converge to zero when $t\to\infty$.
	then there exist $\varepsilon>0$ and a sequence $(t_n)_{n\geq 0}$ of positive numbers with $t_n\to\infty$
	and
	$$
	W_1(t_n)\geqslant \varepsilon \qquad(n\in\mathbb{N}).
	$$
	Denote
	$$
	\delta_n=\max\left\{\delta>0\ \ |\ \ W_1(t_n-\delta)\geqslant
	\frac{\varepsilon}{2}\right\} \qquad\qquad\qquad(n\in\mathbb{N}).
	$$
	Since, according to \eqref{ENCUW} and to Poincar\'e's inequality and a trace theorem,
	we have that $W_1\in L^1[0,\infty)$, it follows that $$
	\sum_{n\in\mathbb{N}}\delta_n <\infty,
	$$
	so that
	$$
	\lim_{n\to\infty} \delta_n=0.
	$$
	On the other hand, we know from \eqref{ENCUW}  that $W_1+W_2$ is
	nonincreasing, so that
	\begin{align*}
		&\frac\varepsilon2+\frac12 |h (t_n-\delta_n)-h_1|^2=W_1(t_n-\delta_n)+W_2(t_n-\delta_n)\notag \\
		\geqslant& W_1(t_n)+W_2(t_n) \geqslant \varepsilon+\frac12 |h
		(t_n)-h_1|^2 \qquad\qquad(n\in\mathbb{N}).
	\end{align*}
	From the above estimate it follows that
	$$
	|h (t_n-\delta_n)-h_1|^2-|h (t_n)-h_1|^2 \geqslant \varepsilon
	\qquad\qquad\qquad(n\in\mathbb{N}).
	$$
	By combining the mean value theorem with the obvious inequality
	$$
	|h(t)-h_1|\leqslant 2 \qquad(t\geqslant 0),
	$$
	we obtain that for every $n\in\mathbb{N}$ there exist
	$\alpha_n\in (0,1)$ with
	\begin{equation}\label{contradictia}
	|\dot h(t_n-\alpha_n\delta_n)|\geqslant
	\frac{\varepsilon}{4\delta_n}\to \infty.
	\end{equation}
On the other hand, according to \eqref{ENCUW} we have that
$$
W_1(t)\leqslant W_1(0)+W_2(0) \qquad\qquad\qquad(t\geqslant 0.
$$
The above estimate and \eqref{LYAP1} clearly contradict \eqref{contradictia},
which ends the proof.
\end{proof}

We are now in a position to give the main  proof of this section.

\begin{proof}[Proof of Theorem \ref{thm:large_time_behavior}]
	 We first remark that Proposition \ref{LYAP_PRIMA} implies that
	\begin{equation}\label{un_cuplu}
		\lim_{t\to \infty} \|v(t,\cdot)\|_{L^2(-1,1)}=0, \qquad
		\lim_{t\to\infty} g(t)=0.
	\end{equation}
	On the other hand, the fact that $h(t)\in (-1,1)$ for every
	$t\geqslant 0$ implies that the set $(h(t))_{t\geqslant 0}$ is
	relatively compact in $\mathbb{R}$. Consequently, there exists a sequence
$(t_n)_{n\geqslant 0}$ of positive numbers with
	\begin{equation}\label{singurica}
		t_n\to \infty,\qquad \lim_{n\to\infty} h(t_n)=h^*\in
		[-1,1].
	\end{equation}
	Moreover, \eqref{ENCUW} implies that the map $t\mapsto
	D\left(S(t)\begin{bmatrix} v_0\\ g_0\\ h_0\end{bmatrix}\right)$ is
	in  $L^1[0,\infty)$. From this fact it follows  that for all $T>0$ we have
	$$
	\lim_{n\to\infty}\int_{t_n}^{T+t_n} D\left(S(t)\begin{bmatrix}
		v_0\\ g_0\\ h_0\end{bmatrix}\right)\, {\rm d}t=0.
	$$
	The above estimate can be combined with the semigroup property of the family
	$(S(t))_{t\geqslant 0}$ to imply that
	$$
	\lim_{n\to\infty}\int_{0}^{T} D\left(S(s)S(t_n)\begin{bmatrix}
		v_0\\ g_0\\ h_0\end{bmatrix}\right)\, {\rm d}s=0.
	$$
	On the other hand, we know from Theorem \ref{global_exists}
	that $S(s)$ is continuous on $L^2[-1,1]\times \mathbb{R} \times
	(-1,1)$, so that we can use \eqref{un_cuplu} and \eqref{singurica}
	to obtain
	\begin{equation}\label{should_be}
	\lim_{n\to\infty} S(t_n)\begin{bmatrix} v_0\\ g_0\\
		h_0\end{bmatrix}=
	\begin{bmatrix} 0\\ 0\\ h^*\end{bmatrix}.
	\end{equation}
	From the last two formulae and the fact that the map defined in \eqref{alta_timpenie}
is continuous we obtain that
	$$
	D\left(S(s)\begin{bmatrix} 0\\ 0\\ h^*\end{bmatrix}\right)=0
	\qquad(s\in [0,T]).
	$$
	To end the proof, we define
$$
\begin{bmatrix} \widetilde
		v(t,\cdot)\\ \widetilde g(t)\\ \widetilde h(t)\end{bmatrix}=S(t)
	\begin{bmatrix} 0\\ 0\\ h^*\end{bmatrix} \qquad\qquad\qquad(t\geqslant 0).
$$
 The from \eqref{should_be} it follows that
	$\widetilde v_y(s,\cdot)=0$ in $L^2(-1,1)$ for almost every $s\in
	[0,T]$. Moreover, since $\widetilde v$ vanishes for $y=\pm 1$, it
	follows that $\widetilde v(s,\cdot)=0$ in ${H}_0^1(-1,1)$
	for almost every $s\in [0,T]$. This implies, in particular, that
	$g(s)=0$ for almost every $s\in [0,T]$.
	Finally, using \eqref{newton_second_law_without_control} and \eqref{control_design} (with $\widetilde v$ instead of $v$ and $h^*$ instead of
	$h_0$) we obtain that $h^*=h_1$, which concludes the proof.
\end{proof}

\bibliographystyle{siam}
\bibliography{references}

\begin{thebibliography}{10}

\bibitem{tucsnak2015particle}
{\sc N.~C{\^\i}ndea, S.~Micu, I.~Roven{\c{t}}a, and M.~Tucsnak}, {\em Particle
  supported control of a fluid--particle system}, Journal de Math{\'e}matiques
  Pures et Appliqu{\'e}es, 104 (2015), pp.~311--353.

\bibitem{Tucsnak20001019}
{\sc C.~Conca, J.~H. San~Martín, and M.~Tucsnak}, {\em Existence of solutions
  for the equations modelling the motion of a rigid body in a viscous fluid},
  Communications in Partial Differential Equations, 25 (2000), p.~1019 –
  1042.
\newblock Cited by: 132.

\bibitem{Desjardins199959}
{\sc B.~Desjardins and M.~Esteban}, {\em Existence of weak solutions for the
  motion of rigid bodies in a viscous fluid}, Archive for Rational Mechanics
  and Analysis, 146 (1999), p.~59 – 71.
\newblock Cited by: 172.

\bibitem{doubova2005some}
{\sc A.~Doubova and E.~Fern{\'a}ndez-Cara}, {\em Some control results for
  simplified one-dimensional models of fluid-solid interaction}, Mathematical
  Models and Methods in Applied Sciences, 15 (2005), pp.~783--824.

\bibitem{gerard2010regularity}
{\sc D.~G{\'e}rard-Varet and M.~Hillairet}, {\em Regularity issues in the
  problem of fluid structure interaction}, Archive for rational mechanics and
  analysis, 195 (2010), pp.~375--407.

\bibitem{Helsa2004PhD75H}
{\sc T.~I. {Hesla}}, {\em {Collisions of smooth bodies in viscous fluids: A
  mathematical investigation}}, PhD thesis, University of Minnesota, Twin
  Cities, Jan. 2004.

\bibitem{hillairet2007lack}
{\sc M.~Hillairet}, {\em Lack of collision between solid bodies in a 2d
  incompressible viscous flow}, Communications in Partial Differential
  Equations, 32 (2007), pp.~1345--1371.

\bibitem{hillairet2010blow}
{\sc M.~Hillairet and T.~Takahashi}, {\em Blow up and grazing collision in
  viscous fluid solid interaction systems}, in Annales de l'IHP Analyse non
  lin{\'e}aire, vol.~27, 2010, pp.~291--313.

\bibitem{koga2022control}
{\sc S.~Koga and M.~Krstic}, {\em Control of the stefan system and
  applications: A tutorial}, Annual Review of Control, Robotics, and Autonomous
  Systems, 5 (2022), pp.~547--577.

\bibitem{liu2013single}
{\sc Y.~Liu, T.~Takahashi, and M.~Tucsnak}, {\em Single input controllability
  of a simplified fluid-structure interaction model}, ESAIM: Control,
  Optimisation and Calculus of Variations, 19 (2013), pp.~20--42.

\bibitem{tucsnak2002global}
{\sc J.~A. San~Martín, V.~Starovoitov, and M.~Tucsnak}, {\em Global weak
  solutions for the two-dimensional motion of several rigid bodies in an
  incompressible viscous fluid}, Archive for Rational Mechanics and Analysis,
  161 (2002), p.~113 – 147.
\newblock Cited by: 166.

\bibitem{Takahashi2003analysis}
{\sc T.~Takahashi}, {\em Analysis of strong solutions for the equations
  modeling the motion of a rigid-fluid system in a bounded domain}, Advances in
  Differential Equations, 8 (2003), p.~1499 – 1532.
\newblock Cited by: 111.

\bibitem{Takahashi2015stabilization}
{\sc T.~Takahashi, M.~Tucsnak, and G.~Weiss}, {\em Stabilization of a
  fluid-rigid body system}, J. Differential Equations, 259 (2015),
  pp.~6459--6493.

\bibitem{vazquez2003large}
{\sc J.~L. V{\'a}zquez and E.~Zuazua}, {\em Large time behavior for a
  simplified 1d model of fluid--solid interaction},  (2003).

\bibitem{Vzquez2006LACKOC}
{\sc J.~L. V{\'a}zquez and E.~Zuazua}, {\em Lack of collision in a simplified
  1d model for fluid?solid interaction}, Mathematical Models and Methods in
  Applied Sciences, 16 (2006), pp.~637--678.

\end{thebibliography}

\end{document}